\newcommand{\F}{\mathcal{F}}
\newcommand{\Sc}{\mathcal{S}}
\newcommand{\R}{\mathbb R}
\newcommand{\T}{\mathbb T}
\newcommand{\N}{\mathbb N}
\newcommand{\Z}{\mathbb Z}
\newcommand{\C}{\mathbb C}
\newcommand{\e}{\varepsilon}
\newcommand{\lb}{\langle}
\newcommand{\rb}{\rangle}
\newcommand{\set}[1]{\left\{#1\right\}}
\newcommand{\RN}[1]{%
	\textup{\uppercase\expandafter{\romannumeral#1}}%
}
\DeclareMathOperator{\supp}{supp}
\newtheorem{lemma}{Lemma}
\newtheorem*{kor}{Corollary}
\newtheorem{theorem}{Theorem}
\newtheorem*{theorem*}{Theorem}
\newtheorem{prop}{Proposition}
\newtheorem*{remark*}{Remark}
\begin{document}

\title[LWP for the Novikov-Veselov equation]{Low regularity local well-posedness for the zero energy Novikov-Veselov equation}

\author[J.~Adams]{Joseph~Adams}

\address{Heinrich-Heine-Universit\"at D\"usseldorf, Mathematisches Institut,
Universit\"atsstra{\ss}e~1, 40225 D\"usseldorf, Germany}
\email{joseph.adams@hhu.de}

\author[A.~Grünrock]{Axel~Grünrock}

\address{Heinrich-Heine-Universit\"at D\"usseldorf, Mathematisches Institut,
Universit\"atsstra{\ss}e~1, 40225 D\"usseldorf, Germany}
\email{gruenroc@math.uni-duesseldorf.de}

\subjclass[2020]{Primary: 35Q53. Secondary: 35Q41}
\keywords{Novikov-Veselov equation --- low regularity local well-posedness --- Fourier restriction norm method}

\begin{abstract}
  The initial value problem $u(x,y,0)=u_0(x,y)$ for the zero energy Novikov-Veselov equation
  \[
  \partial_tu+(\partial ^3 + \overline{\partial}^3)u +3(\partial (u\overline{\partial}^{-1}\partial u)+\overline{\partial}(u\partial^{-1}\overline{\partial}u))=0
  \]
  is investigated by the Fourier restriction norm method. Local well-posedness is
  shown in the nonperiodic case for $u_0 \in H^s(\R^2)$ with $s > - \frac{3}{4}$
  and in the periodic case for data $u_0 \in H^s_0(\T^2)$ with mean zero, where
  $s > - \frac{1}{5}$. Both results rely on the structure of the nonlinearity,
  which becomes visible with a symmetrization argument. Additionally, for the
  periodic problem a bilinear Strichartz-type estimate is derived.
\end{abstract}

\maketitle

\section{Introduction}

Besides both the Kadomtsev-Petviashvili and the Zakharov-Kuznetsov equations the
zero energy Novikov-Veselov equation
\begin{equation}\tag{NV}\label{NV}
  	\partial_tu+(\partial ^3 + \overline{\partial}^3)u +3(\partial (u\overline{\partial}^{-1}\partial u)+\overline{\partial}(u\partial^{-1}\overline{\partial}u))=0
\end{equation}
is another two-dimensional generalization of the famous Korteweg-de Vries
equation~(KdV). Here
\[
	\partial = \frac{\partial}{\partial z}= \frac{1}{2}(\frac{\partial}{\partial x}-i\frac{\partial}{\partial y})
	\qquad \mbox{and} \qquad
	\overline{\partial} = \frac{\partial}{\partial \overline{z}}= \frac{1}{2}(\frac{\partial}{\partial x}+i\frac{\partial}{\partial y})
\]
denote the Wirtinger derivatives. (NV) was introduced in 1984/86 by S. P. Novikov
and A. P. Veselov~\cites{NV84A, NV84B, NV86} in their study of the two-dimensional
Schrödinger operator. These authors considered the unknown function $u: \T^2
\times I \to \R$ to be a \emph{periodic} and \emph{real valued} potential.
Originally the equation was written down in the form
\begin{equation}\label{NVorg}
  	\partial_tu = (\partial ^3 + \overline{\partial}^3)u + \partial (u w) + \overline{\partial} (u \overline{w}), \qquad \overline{\partial}w=3\partial u,
\end{equation}
see equation (14) in~\cite{NV86}, which gives, if $\partial$ and
$\overline{\partial}$ can be inverted in a well-defined way, the --- for complex
valued functions differing slightly from~\eqref{NV} --- following equation
\[
	\partial_tu=(\partial ^3 + \overline{\partial}^3)u +3(\partial (u\overline{\partial}^{-1}\partial u)+\overline{\partial}(u\partial^{-1}\overline{\partial}\overline{u})).
\]
After time reversion this coincides with~\eqref{NV}, if $u$ is real. The
investigation of (NV) in the nonperiodic case via the inverse scattering method
was initiated by Boiti, Leon, Manna, and Pempinelli~\cites{BLMP86,BLMP87} and
continued later on by Tsai~\cite{T93}. Here the authors consider the potential
$u:\R^2 \times I \to \C$ to be a small, rapidly decreasing and, in general,
\emph{complex valued} function. The latter assumption is also made by
Bogdanov~\cite{Bog}, who changed the equation to
\[
	\partial_tu + (\partial ^3 + \overline{\partial}^3)u + \partial (u w_1) + \overline{\partial} (u w_2)=0, \qquad \overline{\partial}w_1=3\partial u,\partial w_2 = 3 \overline{\partial}u,
\]
to which our form~\eqref{NV} corresponds. (It turns out in our analysis that~\eqref{NV} with a $u$ as the last factor in the last term instead of the $\overline{u}$ is by far better behaved.) Bogdanov found the related equation
\[
	\partial_tv+(\partial ^3 + \overline{\partial}^3)v +3(\partial (v\overline{\partial}^{-1}\partial |v|^2)+\overline{\partial}(v\partial^{-1}\overline{\partial}|v|^2)+v\partial^{-1}\overline{\partial}(\overline{v}\overline{\partial}v)+v\overline{\partial}^{-1}\partial(\overline{v}\partial v))=0,
\]
which he called the ``modified VN equation'' (mNV), since the
Miura-type transformation
$$
\mathcal{M}: v \mapsto \mathcal{M}(v):|v|^2-i\partial v
$$
maps a solution $v$ of (mNV) with $\partial v = \overline{\partial v}$ onto a
solution $u:= \mathcal{M}(v)$ of (NV). This discovery led Bogdanov to the
conclusion that ``from the mathematical point of view [\,\ldots] the VN equation is
the natural two dimensional generalization of the KdV equation.''~\cite{Bog}*{p.~219}. (NV) is said to be completely integrable by the inverse scattering
method. The precise meaning of this statement is the subject of a lively
discussion, see e.g.~\cites{CMS, CMMPSS, LMSSI, LMSSII, Perry, MP}. As for (KdV),
smooth and --- in case of $\R^2$ being their domain --- rapidly decreasing
solutions of (NV) satisfy a whole sequence of conservation laws: Integration of
the equation over $\R^2$ or $\T^2$  gives that
$$\int u(x,y,t)dxdy=\mbox{const. },$$
which is referred to as the conservation of the mean and plays a role in our
considerations concerning the periodic case. At the level of $L^2$ we have for
solutions of~\eqref{NV} that
$$\int u(x,y,t)\overline{\partial}^{-1}\partial u(x,y,t)dxdy=\mbox{const.}$$
Unfortunately, this functional is not definite and does not give any a priori
bound for the $L^2$-norm. A recursion formula for the higher order conservation
laws is provided in~\cite{CMMPSS}*{Section~2.3}. Among them there is the
``energy''
$$E(u(t))= \int \partial u(x,y,t)\partial\overline{\partial}^{-1}\partial u(x,y,t)+ \mbox{lower order terms }dxdy= \mbox{const. },$$
which is not definite, either. It turns out that in the whole sequence of
conserved quantities there is none giving a useful a priori bound on any
$H^s$-norm. In fact, such a bound in combination with the existing local
well-posedness theory (see below) would lead to a general global well-posedness
result, eventually at a high level of regularity. But this is impossible as
illustrated by an instructive example of Taimanov and Tsarev (see~\cite{TT}*{Theorem~4}). They found a rational solution of (NV) defined on the whole plane,
decaying at infinity as $|(x,y)|^{-3}$ and developing a singularity in finite
time. As long as it exists, this solution (at fixed time $t \ge 0$) belongs to
$\bigcap_{s \ge 0}H^s(\R^2)$.

On $\R^2$ the Novikov-Veselov equation is invariant under the scaling
transformation $u \mapsto u_{\lambda}$ where, for $\lambda >0$,
$$u_{\lambda}(x,y,t)=\lambda^2u(\lambda x, \lambda y, \lambda^3t).$$
Let $u_{0,\lambda}(x,y)=u_{\lambda}(x,y,0)$. Then
$\|u_{0,\lambda}\|_{\dot{H}^{-1}}$ is independent of $\lambda$, and thus
$s_c=-1$ becomes the critical Sobolev regularity, below which we do not expect
any well-posedness result for the Cauchy problem. In fact, $C^2$-ill-posedness
in $\dot{H}^s(\R^2)$ for $s<-1$ has been shown by Angelopoulos in~\cite{Angel}*{Theorem~17}. The question of well-posedness of the Cauchy problem for (NV) has
been tackled so far with two different approaches. The first is the inverse
scattering method, which has the great advantage of leading to some global
existence theorems and to a solution formula. To the best of our knowledge, the
most advanced results in this direction are those of Perry~\cite{Perry}*{Theorem~1.6} and of Music and Perry~\cite{MP}*{Theorem 1.2}, who built on earlier
works~\cites{M, GM} of Music and of Grinevich and Manakov. The data are assumed to
belong to some weighted Sobolev space of fairly high regularity and to lie in
the image of the Miura map, or to satisfy a certain (sub-)criticality condition,
see Definition 1.1 in~\cite{MP}. Unfortunately, uniqueness and hence continuous
dependence remains open in this approach. On the other hand the Fourier
restriction norm method introduced by Bourgain in~\cites{B1, B2} and further
developed in~\cites{KPV96a, KPV96b, GTV} has been applied to treat the Cauchy
problem (nonperiodic case) for (NV) and (mNV): In~\cite{Angel} Angelopoulos
proved the local well-posedness for (NV) with data in $H^s(\R^2)$, provided that
$s>\frac{1}{2}$, and for (mNV) with data in $H^s(\R^2)$, $s>1$. His result on
(mNV) was substantially improved upon by Schottdorf in~\cite{Schott}, who could
admit $s \ge 0$ and obtain a global result for small data in the critical case
$s=0$. To treat the endpoint case he used the $U^p$- and $V^p$-spaces introduced
by Koch and Tataru~\cites{KT05, KT07, HHK}. In~\cites{KMI, KMII} Kazeykina and
Muñoz generalized the $s>\frac{1}{2}$ result mentioned above to the more general
``nonzero energy NV equation''
$$
\partial_tu+(\partial ^3 + \overline{\partial}^3)u +3(\partial (u\overline{\partial}^{-1}\partial u)+\overline{\partial}(u\partial^{-1}\overline{\partial}u)) + E(\overline{\partial}^{-1}\partial^2 u + \partial^{-1}\overline{\partial}^2u) =0,
$$
for a fixed parameter $E \in \mathbb{R}$, which is much harder to analyze. All these LWP results rely exclusively on a
global smoothing effect of solutions to the linear part of the equation,
expressed in terms of (eventually bilinear) Strichartz-type estimates with
derivative gain. Such a smoothing effect does not exist in the periodic case.

In the sequel we will follow this second approach. Additionally we will take the
structure of the nonlinearity into account, which will allow us to push down the
lower bound on $s$ in the nonperiodic case substantially and to reach something
below $L^2(\T^2)$ for data of mean zero in the periodic case. We emphasize, that our arguments do not cover the case of nonzero energy, see also the open question (3) in the last section.\\

{\bf{Acknowledgement.}} The authors wish to thank Karin Halupczok for valuable hints concerning the number theoretic aspects of the periodic case. They also want to thank the anonymous referees for valuable hints. 

\section{General arguments and main results}

We consider the initial value problem $u(x,y,0)=u_0(x,y)$ for
equation~\eqref{NV}, where either

\begin{itemize}
  \item the data $u_0$ and the solution $u(t)$ at time $t$ belong to some
    classical Sobolev space $H^s(\R^2)$ of functions defined on the whole plane
    (Cauchy problem, nonperiodic case), or
  \item $u_0$ and $u(t)$ are elements of $H^s_0(\T^2)$, the Sobolev space of (in
    both directions) periodic functions on $\R^2$ of mean zero, i.e.~we assume
    $$\int_{\T^2}u_0(x,y)dxdy=0,$$
    which is preserved under the evolution of (NV).
\end{itemize}
In contrast to the majority of the more recent literature we follow Bogdanov and
consider data and solution to be complex valued. In the end the uniqueness part
of our results will give that solutions with real valued data remain real
valued. In both cases considered here the operators
$\overline{\partial}^{-1}\partial$ and $\partial^{-1}\overline{\partial}$ are
well-defined as bounded Fourier multipliers from $H^s$ to $H^s$. To be more
explicit, let us write the Fourier transform in the space variables as
$$\F_{xy}f(\xi, \eta)= c \int e^{-ix\xi-iy\eta}f(x,y)dxdy,$$
where the integral is taken over $\R^2$ or over $\T^2$, respectively. Then we
have
$$\overline{\partial}^{-1}\partial=\F_{xy}^{-1}\frac{i\xi+\eta}{i \xi -\eta}\F_{xy}=\F_{xy}^{-1}\frac{\xi^2-\eta^2-2i\xi\eta}{\xi^2 +\eta^2}\F_{xy}=:\frac{\partial_x^2-\partial_y^2}{\Delta}-i\frac{2\partial_x\partial_y}{\Delta}$$
and
$$\partial^{-1}\overline{\partial}=\F_{xy}^{-1}\frac{i\xi-\eta}{i \xi +\eta}\F_{xy}=\F_{xy}^{-1}\frac{\xi^2-\eta^2+2i\xi\eta}{\xi^2 +\eta^2}\F_{xy}=:\frac{\partial_x^2-\partial_y^2}{\Delta}+i\frac{2\partial_x\partial_y}{\Delta}.$$
Since $\partial^3 +
\overline{\partial}^3=\frac{1}{4}(\partial_x^3-3\partial_x\partial_y^2)$ we can
rewrite equation~\eqref{NV} in real cartesian coordinates as
$$\partial_tu + \frac{1}{4}(\partial_x^3-3\partial_x\partial_y^2)u +3N(u)=0, $$
where
\begin{equation}\label{N}
  N(u)=\partial_x(u\frac{\partial_x^2-\partial_y^2}{\Delta}u)-\partial_y(u\frac{2\partial_x\partial_y}{\Delta}u).
\end{equation}
Since constant factors in front of the nonlinearity don't play any role in the
local analysis ahead, we may, after rescaling the time variable, consider the
equation
\begin{equation}\label{NVreal}
 \partial_tu + (\partial_x^3-3\partial_x\partial_y^2)u = N(u)
\end{equation}
with $N(u)$ as in~\eqref{N} and initial condition
\begin{equation}\label{IC}
  u(x,y,0)=u_0(x,y).
\end{equation}
Solutions of the linear part of this equation with initial value $u_0$ will be
denoted by $U_{\varphi}(t)u_0=e^{-it\varphi(D)}u_0$ with the phase function
$\varphi(\xi, \eta)=\xi^3-3\xi\eta^2$, which determines the weight in the
Bourgain spaces adequate for our problem. For the \emph{nonperiodic} case we
define
$$X_{s,b}:=\{f \in \Sc '(\R^3): \|f\|_{X_{s,b}}< \infty\}$$
with
\begin{align*}
\|f\|^2_{X_{s,b}}&:=\|\langle \tau - \varphi(\xi,\eta)\rangle^b\langle (\xi,\eta) \rangle^s\widehat{f}\|^2_{L^2_{\tau \xi \eta}}
\\&=\int_{\R^3}\langle \tau - \varphi(\xi,\eta)\rangle^{2b}\langle (\xi,\eta) \rangle^{2s}|\widehat{f}(\xi,\eta,\tau)|^2 d\tau d\xi d\eta ,
\end{align*}
where, for $x \in \R^n$, $\langle x \rangle = (1+|x|^2)^{\frac12}$ and
$\widehat{f}$ denotes the Fourier transform with respect to all variables
including time. The corresponding time restriction norm is denoted by
$$\|f\|_{X_{s,b}^{\delta}}:=\inf\{\|\widetilde{f}\|_{X_{s,b}}:\widetilde{f} \in X_{s,b}, \widetilde{f}\big|_{\R^2 \times (- \delta, \delta)}=f \},$$
defining our solution space, which is embedded continuously in
$C([-\delta,\delta], H^s(\R^2))$, if $b > \frac12$. Similarly, for the
\emph{periodic} case we set
$$\dot{X}_{s,b}:=\{f \in \Sc '(\R^3): f \mbox{ is periodic in space and } \|f\|_{\dot{X}_{s,b}}< \infty\},$$
where now
$$\|f\|^2_{\dot{X}_{s,b}}:= \int_{\R} \sum_{(\xi, \eta) \in \Z^2\setminus \{(0,0)\}}\langle \tau - \varphi(\xi,\eta)\rangle^{2b}\langle (\xi,\eta) \rangle^{2s}|\widehat{f}(\xi,\eta,\tau)|^2 d\tau.$$
The restriction norm spaces here are denoted by $\dot{X}^{\delta}_{s,b}$. We
will have to choose the parameter $b=\frac12$, which would lose us the embedding into a space of continous functions. In order to recover the continuity of the solution in the periodic case we will also prove estimates in the function spaces defined by
$$\|f\|^2_{\dot{Y}^s}:=\sum_{(\xi, \eta) \in \Z^2\setminus \{(0,0)\}}\left(\int_{\R}\langle \tau - \varphi(\xi,\eta)\rangle^{-1}\langle (\xi,\eta) \rangle^{s}|\widehat{f}(\xi,\eta,\tau)| d\tau \right)^2,$$
similar to those introduced in~\cite{GTV}. Now we are able to give a precise
statement of our results. Concerning the nonperiodic case we have:

\begin{theorem}\label{NVcont}
  Let $s>-\frac34$ and $u_0 \in H^s(\R^2)$. Then there exist $b > \frac12$ and
  $\delta=\delta (\|u_0\|_{H^s})>0$, such that there is a unique solution $u \in
  X_{s,b}^{\delta}$ of~\eqref{NVreal},~\eqref{IC}. Moreover, for every $R>0$ the
  solution operator
  $$S_R: H^s(\R^2) \supset B_R(0) \to X_{s,b}^{\delta(R)}, \quad u_0 \mapsto S_R(u_0):=u$$
  is Lipschitz continuous.
\end{theorem}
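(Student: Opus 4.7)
The plan is a standard Fourier restriction norm contraction argument. First, I recast~\eqref{NVreal},~\eqref{IC} in Duhamel form,
\[
	u(t)=\psi(t/\delta)U_\varphi(t)u_0-\psi(t/\delta)\int_0^tU_\varphi(t-t')N(u)(t')\,dt',
\]
with a smooth temporal cutoff $\psi$ compactly supported near $0$, and seek a fixed point in a ball of $X_{s,b}^\delta$ for some $b$ slightly above $\tfrac12$. The standard Bourgain/Ginibre--Tsutsumi--Velo linear estimates give
\[
	\|\psi(\cdot/\delta)U_\varphi u_0\|_{X_{s,b}}\ls\|u_0\|_{H^s}\quad\text{and}\quad\left\|\psi(\cdot/\delta)\int_0^\cdot U_\varphi(\cdot-t')F\,dt'\right\|_{X_{s,b}}\ls\delta^\e\|F\|_{X_{s,b-1+\e}}
\]
for any $0<\e<b-\tfrac12$, so the contraction reduces to the single bilinear estimate
\[
	\|N(u,v)\|_{X_{s,b-1+\e}^\delta}\ls\|u\|_{X_{s,b}^\delta}\|v\|_{X_{s,b}^\delta}
\]
for the symmetric bilinear form $N(u,v)$ extending~\eqref{N} with $N(u,u)=N(u)$. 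Uniqueness and Lipschitz continuous dependence of $S_R$ then follow from this estimate by the standard differencing argument.

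The decisive algebraic input, hinted at in the abstract, is an identity for the Fourier symbol $\sigma((\xi_1,\eta_1),(\xi_2,\eta_2))$ of the symmetrized $N(u,v)$. Writing $z_j=\xi_j+i\eta_j$ and exploiting that $\varphi(\xi,\eta)=\textup{Re}(z^3)$, the resonance function factors as
\[
	R:=\varphi(\xi_1,\eta_1)+\varphi(\xi_2,\eta_2)-\varphi(\xi_1+\xi_2,\eta_1+\eta_2)=-3\,\textup{Re}\bigl(z_1z_2(z_1+z_2)\bigr).
\]
Since $\overline\partial^{-1}\partial$ and $\partial^{-1}\overline\partial$ have unimodular Fourier symbols $z/\bar z$ and $\bar z/z$, a direct computation of the symmetrized symbol yields the clean identity
\[
	\sigma=-\frac{i\,R\,\textup{Re}(z_1\bar z_2)}{3|z_1|^2|z_2|^2},\qquad\text{and hence}\qquad|\sigma|\le\frac{|R|}{3|z_1|\,|z_2|}.
\]
The entire symbol is thus proportional to the resonance function; this is precisely the structural cancellation produced by the symmetrization. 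It simultaneously supplies a free unit of smoothing relative to the naive bound $|\sigma|\ls|z_1+z_2|$ and lets the $X_{s,b}$-modulation weights absorb the full factor $R$. Without it one would lose one derivative and be unable to reach $s<0$.

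With this identity in hand, I would reduce the bilinear estimate by duality and Plancherel to a weighted trilinear $L^2$-integral over the convolution constraint and perform the standard dyadic decomposition $|z_j|\sim N_j$, $|z_1+z_2|\sim N$. In each dyadic block the factor $R$ is assigned to whichever of the three modulations $\langle\tau_j-\varphi(\xi_j,\eta_j)\rangle$ is largest, which is legitimate since $|R|\ls$ (largest modulation). What remains is a weighted trilinear convolution, handled by Cauchy--Schwarz together with an $L^4$-Strichartz estimate for $U_\varphi$ --- available because the Hessian of $\varphi$ is non-degenerate away from a one-dimensional variety --- and a bilinear Strichartz refinement in the high$\times$high pieces. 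The hard part will be the high$\times$high$\to$low interaction $N_1\sim N_2\gg N$ in the almost-resonant regime: here $|R|$ can be as small as $N_1^2 N$, the multipliers $\overline\partial^{-1}\partial,\partial^{-1}\overline\partial$ contribute no decay, and the gains from the symbol identity and the bilinear Strichartz refinement must be balanced carefully. Exponent bookkeeping in this case turns out to be sharp and is precisely what pins the threshold at $s>-\tfrac34$, in close analogy with the Kenig--Ponce--Vega analysis of KdV on the line.
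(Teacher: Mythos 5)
Your sketch follows the same route as the paper: a Duhamel/contraction reduction in $X_{s,b}$ to a single bilinear estimate for the symmetrized operator $B$, together with the structural identity that the Fourier multiplier of $B$ equals the resonance function times $(\xi_1\xi_2+\eta_1\eta_2)/(|z_1|^2|z_2|^2)$, followed by a case analysis on which modulation dominates, paired with $L^4$-Strichartz bounds. Two implementation details, however, are stated imprecisely relative to what the paper actually does. First, one cannot ``absorb the full factor $R$'' into the modulation weights: since $b'$ stays strictly above $-1$, only a fractional power $|r|^{\theta}$ with $\theta = -b' \approx \tfrac12$ can be passed to a modulation, and the residual $|r|^{1-\theta}$ must be bounded pointwise by $\big(|(\xi,\eta)|\,|(\xi_1,\eta_1)|\,|(\xi_2,\eta_2)|\big)^{1-\theta}$ to distribute derivative gain onto the two inputs; it is precisely this split in~\eqref{resonance} that makes $s>-\tfrac34$ reachable. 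Second, the nonperiodic proof does not invoke a bilinear Strichartz refinement: the only Strichartz input is the linear $L^4$-estimate with $I^{1/4}$ smoothing obtained from the Ben-Artzi--Koch--Saut dispersive bound, and the high$\times$high-to-low interaction (Subcase~2.3.2) is closed by a near-endpoint version of that linear estimate combined with H\"older and Sobolev embeddings.
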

Similarly, for the periodic case we will prove:
\begin{theorem}\label{NVper}
  Let $s>-\frac15$ and $u_0 \in H_0^s(\T^2)$. Then there exist $\delta=\delta
  (\|u_0\|_{H^s})>0$ and a unique solution $u \in
  \dot{X}_{s,\frac12}^{\delta}\cap C([-\delta,\delta], H_0^s(\T^2))$
  of~\eqref{NVreal},~\eqref{IC}. For every $R>0$ the solution operator
  $$S_R: H_0^s(\T^2) \supset B_R(0) \to \dot{X}_{s,\frac12}^{\delta(R)}, \quad u_0 \mapsto S_R(u_0):=u$$
  is Lipschitz continuous.
\end{theorem}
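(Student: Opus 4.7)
The plan is to solve the integral form of \eqref{NVreal}--\eqref{IC} by a Picard iteration in the space $\dot{X}^\delta_{s,1/2}$, supplemented by the auxiliary space $\dot{Y}^s$ (with time cutoff) in order to recover continuity of the solution curve $t\mapsto u(t)\in H^s_0(\T^2)$. The latter is needed because at the endpoint $b=1/2$ the space $\dot{X}^\delta_{s,1/2}$ alone does not embed into $C([-\delta,\delta], H^s_0)$. After a smooth time cutoff, standard linear and inhomogeneous bounds reduce the construction to proving a single bilinear estimate of the schematic form
\begin{equation*}
  \|N(u_1, u_2)\|_* \lesssim \delta^\theta \prod_{j=1}^2 \|u_j\|_{\dot{X}^\delta_{s,1/2} \cap \dot{Y}^s}
\end{equation*}
for some $\theta>0$, in a suitable companion norm $\|\cdot\|_*$ predual to the solution space, with $N(\cdot,\cdot)$ the symmetric bilinear form underlying \eqref{N}. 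Restricting to mean-zero data is essential: the multipliers $\overline{\partial}^{-1}\partial$ and $\partial^{-1}\overline{\partial}$ in \eqref{N} require excluding the frequency $(\xi,\eta)=(0,0)$, and this condition is preserved by the flow.

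By duality and Plancherel the bilinear estimate reduces to a weighted trilinear sum over frequencies $(\xi_j,\eta_j)\in\Z^2\setminus\{0\}$ subject to the convolution constraint $(\xi,\eta)=(\xi_1,\eta_1)+(\xi_2,\eta_2)$. The key algebraic input is a symmetrization: interchanging the roles of $u_1$ and $u_2$ in \eqref{N} and averaging, the combined symbol becomes proportional to the resonance function
\begin{equation*}
  H(\xi_1,\eta_1;\xi_2,\eta_2) := \varphi(\xi_1+\xi_2,\eta_1+\eta_2) - \varphi(\xi_1,\eta_1) - \varphi(\xi_2,\eta_2),
\end{equation*}
which, via the identity $\tau-\varphi(\xi,\eta)=(\tau_1-\varphi(\xi_1,\eta_1))+(\tau_2-\varphi(\xi_2,\eta_2))-H$, is controlled by the maximum of the three modulations $|\sigma|,|\sigma_1|,|\sigma_2|$. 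This cancellation absorbs the derivative loss from the symbol of $N$ and is precisely the structural feature that distinguishes \eqref{NV} from the variant with $\overline{u}$ in the last factor flagged in the introduction; without it the estimate has no chance of closing at negative regularity.

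With the symmetrization in hand, Littlewood-Paley decomposing all three frequencies and splitting cases according to which of $\sigma,\sigma_1,\sigma_2$ is largest reduces the bilinear estimate to a sum of dyadic pieces. In each piece one is led to a counting problem: for fixed $h\in\Z$ and prescribed dyadic sizes, control the number of lattice points $(\xi_1,\eta_1)\in\Z^2$ lying on the cubic level set $\{H(\,\cdot\,;\xi_2,\eta_2)=h\}$, whose algebraic structure is inherited from $\varphi=\xi^3-3\xi\eta^2$. The relevant bounds come from classical lattice-point theorems on plane curves, notably Bombieri-Pila~\cite{BP} and Huxley~\cite{Huxley} (see also Schmidt~\cite{Schmidt}). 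This arithmetic step is the main obstacle; when organized appropriately it yields the bilinear Strichartz-type estimate for the linear $\T^2$ flow promised in the abstract, and the tension between these counting bounds and the resonance gain is exactly what forces the threshold $s>-\tfrac{1}{5}$. Once the bilinear estimate is secured, the contraction mapping theorem produces a unique fixed point $u\in\dot{X}^\delta_{s,1/2}\cap C([-\delta,\delta], H^s_0(\T^2))$ together with the claimed Lipschitz dependence on the data.
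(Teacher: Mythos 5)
Your sketch follows the same broad strategy as the paper: a contraction in $\dot{X}^{\delta}_{s,1/2}$ aided by the auxiliary $\dot{Y}^s$ norm to recover continuity at the endpoint $b=\tfrac12$, the symmetrization of $N(u)=\tfrac12 B(u,u)$ exposing that the bilinear symbol $m$ is dominated by the resonance function $r$, and a lattice-point count on plane curves as the engine behind the bilinear Strichartz estimate. Three points of your description deviate from what the paper actually does, and the first two matter.

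First, the number-theoretic input is Schmidt's theorem, not Bombieri--Pila or Huxley. The paper is explicit (in the Open Questions section) that Schmidt is chosen precisely because its bound $\lesssim N^{3/5+}$ is \emph{uniform} over the family of curves in play, and that it is unclear whether the sharper Bombieri--Pila or Huxley estimates admit a uniform version applicable here; taking those as the primary tool, as your ``notably\ldots (see also Schmidt)'' phrasing suggests, would leave a gap. Second, the counting problem in the paper is not on cubic level sets of $H$. After fixing the output triple $(\xi,\eta,\tau)$ and centering, the constraint $\tau=\varphi(\xi_1,\eta_1)+\varphi(\xi_2,\eta_2)$ becomes a \emph{quadratic} curve in the free variable (the cubic terms in $\xi_1,\eta_1$ cancel, leaving a hyperbola $a(x^2-y^2)+2bxy=c$); a genuinely cubic curve $(x+a)(x^2-y^2)=2(y+b)xy$ only appears in the degenerate sub-case $\tau-\tfrac14\xi^3+\tfrac74\xi\eta^2=0$, which the paper treats separately by dualizing and counting over $(\xi,\eta)$ with $(\xi_1,\eta_1)$ fixed. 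Getting the degree and parameterization right is essential for invoking Schmidt with a fixed exponent $\gamma=\tfrac35$, which is then threaded through the estimates as $s>\tfrac{\gamma-1}{2}=-\tfrac15$. Third, the paper introduces a bilinear projector $Q$ killing the frequency configuration $\xi=\xi_1=\xi_2=0$ before stating the bilinear Strichartz estimate; without $Q$ the estimate fails (the example $\hat{u}_0=\hat{v}_0=\delta_{\xi,0}\chi_{[-R,R]}(\eta)$ forces the loss $R^{1/2}$), and it is the structure of $m$ vanishing on that degenerate set that makes $Q$ harmless in the nonlinear application. You mention the mean-zero condition but not this finer structural point, and also omit the second dyadic decomposition and almost-orthogonality argument needed for the ``transposed'' estimate $\|Q(u,v)\|_{L^2_tH^{-\gamma/2-}}\lesssim\|u\|_{X_{0,1/2-}}\|v\|_{X_{0,1/2-}}$, which is needed in the case where the high modulation falls on the low-frequency factor.
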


\section{Symmetrization and the resonance function}

We write the nonlinearity~\eqref{N} as $N(u)=\frac12 B(u,u)$ with the bilinear operator
$$B(u,v)=\partial_x \left( \Big(\frac{\partial_x^2-\partial_y^2}{\Delta}u\Big)v + u\Big(\frac{\partial_x^2-\partial_y^2}{\Delta}v\Big)\right) - \partial_y \left( \Big(\frac{2\partial_x\partial_y}{\Delta}u\Big)v + u\Big(\frac{2\partial_x\partial_y}{\Delta}v\Big)\right).$$
Then the partial Fourier transform of $B(u,v)$ with respect to the space
variables becomes (ignoring constants and the time dependence)
\begin{align*}
  \F_{xy}B(u,v)(\xi,\eta)= \xi \int_* \left(\frac{\xi_1^2-\eta_1^2}{\xi_1^2+\eta_1^2}+\frac{\xi_2^2-\eta_2^2}{\xi_2^2+\eta_2^2} \right)\F_{xy}u(\xi_1,\eta_1)\F_{xy}v(\xi_2,\eta_2)d\xi_1d\eta_1 \\
  - \eta \int_* \left(\frac{2\xi_1\eta_1}{\xi_1^2+\eta_1^2}+\frac{2\xi_2\eta_2}{\xi_2^2+\eta_2^2} \right)\F_{xy}u(\xi_1,\eta_1)\F_{xy}v(\xi_2,\eta_2)d\xi_1d\eta_1,
\end{align*}
where $\displaystyle \int_*$ denotes integration under the convolution
constraint $(\xi,\eta)=(\xi_1,\eta_1)+(\xi_2,\eta_2)$. For the complete
multiplier in this expression an elementary calculation shows that
\begin{align*}
  m(\xi_1,\xi_2,\eta_1,\eta_2):= \xi \left(\frac{\xi_1^2-\eta_1^2}{\xi_1^2+\eta_1^2}+\frac{\xi_2^2-\eta_2^2}{\xi_2^2+\eta_2^2} \right) -\eta \left(\frac{2\xi_1\eta_1}{\xi_1^2+\eta_1^2}+\frac{2\xi_2\eta_2}{\xi_2^2+\eta_2^2} \right) \\
  = \frac{2(\xi_1\xi_2+\eta_1\eta_2)}{(\xi_1^2+\eta_1^2)(\xi_2^2+\eta_2^2)}\big(\xi(\xi_1\xi_2-\eta_1\eta_2)-\eta(\xi_1\eta_2+\xi_2\eta_1) \big).
\end{align*}
We wish to show estimates of the type
$$\|B(u,v)\|_{X_{s,b'}} \lesssim \|u\|_{X_{s,b}}\|v\|_{X_{s,b}}$$
with $s$ as low as possible and $b'=-\frac12+2\varepsilon$,
$b=\frac12+\varepsilon$, where $\varepsilon >0$ ($\varepsilon=0$ in the periodic
case). Choosing $f$, $g$ such that $\|f\|_{L^2_{\xi\eta\tau}}=\|u\|_{X_{s,b}}$ and $\|g\|_{L^2_{\xi\eta\tau}}=\|v\|_{X_{s,b}}$ the previous inequality turns into
$$\|\langle \tau-\varphi(\xi,\eta) \rangle^{b'} \langle(\xi,\eta) \rangle^s I_{f,g}\|_{L^2_{\xi\eta\tau}} \lesssim \|f\|_{L^2_{\xi\eta\tau}}\|g\|_{L^2_{\xi\eta\tau}}$$
with
\begin{align*}
	I_{f,g}(\xi,\eta,\tau):= \int_*m(\xi_1,\xi_2,\eta_1,\eta_2)\langle(\xi_1,\eta_1) \rangle^{-s}\langle \tau_1-\varphi(\xi_1,\eta_1) \rangle^{-b}f(\xi_1,\eta_1,\tau_1) \times \ldots \\ \ldots \langle(\xi_2,\eta_2) \rangle^{-s}\langle \tau_2-\varphi(\xi_2,\eta_2) \rangle^{-b}g(\xi_2,\eta_2,\tau_2)d\xi_1 d\eta_1 d \tau_1,
\end{align*}
where now $(\xi,\eta,\tau)=(\xi_1,\eta_1,\tau_1)+(\xi_2,\eta_2,\tau_2)$. $f$ and $g$ are assumed to be nonnegative and $\displaystyle \int_* \ldots d\xi_1 d\eta_1$ may denote integration with respect to the Lebesgue measure on $\R^2$ as well as alternatively the counting measure on $\Z^2 \setminus \{(0,0)\}$.
Now the resonance function,
i.e.~the quantity controlled by
$$\max\{|\tau-\varphi(\xi,\eta)|, |\tau_1-\varphi(\xi_1,\eta_1)|, |\tau_2-\varphi(\xi_2,\eta_2)|\},$$
for our nonlinearity, is given by
$$r(\xi_1,\xi_2,\eta_1,\eta_2):=\varphi(\xi,\eta)-\varphi(\xi_1,\eta_1)-\varphi(\xi_2,\eta_2)=3\big(\xi(\xi_1\xi_2-\eta_1\eta_2)-\eta(\xi_1\eta_2+\xi_2\eta_1)\big).$$
Again we leave the elementary verification of the last equality to the reader.
Comparing the expressions for $m$ and $r$ we arrive at
$$m(\xi_1,\xi_2,\eta_1,\eta_2)= \frac23 \frac{\xi_1\xi_2+\eta_1\eta_2}{(\xi_1^2+\eta_1^2)(\xi_2^2+\eta_2^2)} r(\xi_1,\xi_2,\eta_1,\eta_2),$$
which gives, for $\theta \in (0,1)$, the inequality
\begin{align}\label{resonance}
|m(\xi_1,\xi_2,\eta_1,\eta_2)|\le \frac{|r(\xi_1,\xi_2,\eta_1,\eta_2)|^{\theta}|r(\xi_1,\xi_2,\eta_1,\eta_2)|^{1-\theta}}{|(\xi_1,\eta_1)||(\xi_2,\eta_2)|}  \\ \nonumber \le |(\xi,\eta)|^{1-\theta}|(\xi_1,\eta_1)|^{-\theta}|(\xi_2,\eta_2)|^{-\theta}|r(\xi_1,\xi_2,\eta_1,\eta_2)|^{\theta},
\end{align}
the latter since $|r(\xi_1,\xi_2,\eta_1,\eta_2)|\le
|(\xi,\eta)||(\xi_1,\eta_1)||(\xi_2,\eta_2)|$. This will be used especially with
$\theta = -b' \approx \frac12$.

\section{The nonperiodic case}

In addition to the structure of the nonlinearity discussed above we will make
use of smoothing estimates of Strichartz-type for the unitary group
$(U_{\varphi}(t))_{t \in \R}$. Here and below
$I^{\sigma}=\F_{xy}^{-1}|(\xi,\eta)|^{\sigma}\F_{xy}$ represents the Riesz
potential operator of order $- \sigma$ with respect to the space variables.

\begin{lemma}
  For $u_0 \in L^2(\R^2)$ let $U_{\varphi}u_0$ denote the solution of
  $$\partial_t u+ (\partial_x^3-3\partial_x\partial_y^2)u=0 \quad \mbox{with} \quad u(0)=u_0.$$
  Then the following estimates hold true:
  \begin{itemize}
    \item If $p>3$ and $\displaystyle \frac{3}{p}+\frac{2}{q}=1$:
     \begin{equation}\label{Str0}
       \|U_{\varphi}u_0\|_{L_t^pL^q_{xy}} \lesssim \|u_0\|_{L^2_{xy}},
     \end{equation}
    \item if $p>2$ and $\displaystyle \frac{2}{p}+\frac{2}{q}=1$:
     \begin{equation}\label{Str}
       \|I^{\frac{1}{p}}U_{\varphi}u_0\|_{L_t^pL^q_{xy}} \lesssim \|u_0\|_{L^2_{xy}}.
     \end{equation}
  \end{itemize}
\end{lemma}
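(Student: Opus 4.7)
The plan is to derive both estimates from fixed-time dispersive bounds for $U_\varphi$, combined with the $L^2$-conservation $\|U_\varphi(t)u_0\|_{L^2}=\|u_0\|_{L^2}$, via complex interpolation and a standard $TT^*$--Hardy--Littlewood--Sobolev argument.

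First I would establish the global dispersive estimate
\[
\|U_\varphi(t)u_0\|_{L^\infty_{xy}} \lesssim |t|^{-2/3}\|u_0\|_{L^1_{xy}}.
\]
The kernel of $U_\varphi(t)$ is the oscillatory integral $c\int e^{i(t\varphi(\xi,\eta)+x\xi+y\eta)}\,d\xi d\eta$. The phase $\varphi(\xi,\eta)=\xi^3-3\xi\eta^2$ is homogeneous of degree $3$, and an elementary computation gives $\det\mathrm{Hess}\,\varphi(\xi,\eta)=-36(\xi^2+\eta^2)$, so the Hessian is non-degenerate on the cotangent sphere. Stationary phase then yields the scaling-predicted $|t|^{-2/3}$ decay; this is essentially the content of~\cite{BKS}. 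Interpolating with unitarity produces $\|U_\varphi(t)\|_{L^{q'}\to L^q}\lesssim |t|^{-(2/3)(1-2/q)}$ for $q\in[2,\infty]$. A $TT^*$ reformulation then reduces~\eqref{Str0} to a fractional integration bound in~$t$ with kernel $|t-s|^{-(2/3)(1-2/q)}$; Hardy--Littlewood--Sobolev applies precisely when $2/p=(2/3)(1-2/q)$, i.e.~$3/p+2/q=1$, and the range $p>3$ corresponds to excluding the spatial endpoint $q=\infty$ (equivalently, the temporal endpoint $p=3$).

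For~\eqref{Str} a frequency-localized strengthening is needed. A Littlewood--Paley decomposition combined with the parabolic rescaling $(\xi,\eta,t)\mapsto(N\xi,N\eta,N^{-3}t)$ yields
\[
\|P_N U_\varphi(t)u_0\|_{L^\infty_{xy}} \lesssim N^{-1}|t|^{-1}\|P_N u_0\|_{L^1_{xy}},
\]
because at dyadic scale~$N$ both Hessian eigenvalues are of size~$N$, so $|\det\mathrm{Hess}|^{1/2}\sim N$ produces the stronger $|t|^{-1}$ stationary-phase decay while the Jacobian of the rescaling contributes the $N^{-1}$ factor. Running the same interpolation plus $TT^*$/HLS argument on each frequency shell now forces the admissibility relation $2/p+2/q=1$ (with $p>2$) and produces $\|P_N U_\varphi u_0\|_{L^p_tL^q_{xy}}\lesssim N^{-1/p}\|P_N u_0\|_{L^2_{xy}}$. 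Since $p,q\ge 2$ throughout this range, the Littlewood--Paley square function together with Minkowski's inequality in $\ell^2$ sums the dyadic estimates to the global bound $\|I^{1/p}U_\varphi u_0\|_{L^p_tL^q_{xy}}\lesssim\|u_0\|_{L^2_{xy}}$.

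The central technical point is the dispersive estimate itself---both the global $|t|^{-2/3}$ bound and its frequency-localized $|t|^{-1}$ refinement---and proving it requires a careful stationary phase analysis that is uniform in the base point $(x,y)$. Once one quotes~\cite{BKS} (or runs the short stationary phase argument enabled by the explicit Hessian formula above), the remaining steps are routine functional analysis.
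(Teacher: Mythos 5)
Your overall architecture matches the paper's: derive the dispersive (fixed-time) decay for the propagator, frequency-localize at dyadic scale $N$ to obtain $\|P_N U_\varphi(t)\|_{L^1\to L^\infty}\lesssim N^{-1}|t|^{-1}$, run Riesz--Thorin interpolation plus the $TT^*$/Hardy--Littlewood--Sobolev scheme on each shell, and recombine via the Littlewood--Paley square function; both you and the authors point to~\cite{BKS} for the key kernel bound and to~\cite{GV} (implicitly in your case, explicitly in the paper) for the derivative-gain summation. The one genuine difference is your treatment of~\eqref{Str0}: the paper simply applies a spatial Sobolev embedding to~\eqref{Str}, trading $I^{1/p}$ and an exponent $q'$ with $\frac{2}{p}+\frac{2}{q'}=1$ for $q$ with $\frac{3}{p}+\frac{2}{q}=1$, whereas you derive~\eqref{Str0} independently from the global decay $\|U_\varphi(t)u_0\|_{L^\infty}\lesssim|t|^{-2/3}\|u_0\|_{L^1}$. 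Both routes give the stated range, but the paper's is cleaner because it avoids having to justify the global estimate separately.

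On that last point your stationary-phase sketch has a gap. Non-degeneracy of the Hessian on the cotangent sphere does not, by itself, give a uniform $|t|^{-2/3}$ bound on the kernel, because the critical points of the full phase $t\varphi(\xi,\eta)+x\xi+y\eta$ migrate to the origin as $(x,y)\to 0$ (after scaling to $t=\pm1$), and there $\det\mathrm{Hess}\,\varphi=-36(\xi^2+\eta^2)\to 0$. One really does need a degenerate stationary phase argument (the 2D analogue of boundedness of the Airy function), or one can simply deduce the $|t|^{-2/3}$ bound from the frequency-localized estimate you already quote: $\sum_N \min\bigl(N^2,\, N^{-1}|t|^{-1}\bigr)\sim|t|^{-2/3}$. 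Relatedly, the estimate the paper actually cites from~\cite{BKS} is $\|IU_\varphi(t)u_0\|_{L^\infty}\lesssim|t|^{-1}\|u_0\|_{L^1}$ (Theorem 5.6, Part 2), not the bare $|t|^{-2/3}$ bound; your claim that the latter is ``essentially the content of \cite{BKS}'' is therefore slightly off --- it is a corollary, not the stated result. With these two points patched, your argument goes through and recovers the stated ranges $p>3$ for~\eqref{Str0} and $p>2$ for~\eqref{Str} exactly as in the paper.
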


\emph{Citation and proof:}~\eqref{Str0} follows from~\eqref{Str} by a Sobolev
embedding. To prove~\eqref{Str} one starts with the estimate
$$\|IU_{\varphi}(t)u_0\|_{L^{\infty}_{xy}} \lesssim |t|^{-1}\|u_0\|_{L^1_{xy}},$$
which is Part 2. of Theorem 5.6 in~\cite{BKS}. For a dyadic piece of the data $P_{\Delta l} u =
\F_{xy}^{-1}\chi_{\{|(\xi,\eta)| \sim 2^l\}}\F_{xy}u$ this reads
$$\|U_{\varphi}(t)P_{\Delta l}u_0\|_{L^{\infty}_{xy}} \lesssim |t|^{-1}2^{-l}\|u_0\|_{L^1_{xy}}.$$
Now the standard proof of the Strichartz estimates using Riesz-Thorin
interpolation, the Hardy-Littlewood-Sobolev inequality and the $TT^*$-argument
applies. Since one has to deal with a gain of derivatives we refer
to~\cite{GV}*{Section~3} for more details. \hfill $\Box$

\quad \\
We remark that the endpoints $p=3$ in~\eqref{Str0} and $p=2$ in~\eqref{Str} are
excluded. Considering the results of Montgomery-Smith \cite{MS} and Tao \cite{T} we strongly believe the latter endpoint estimate to fail. By the transfer principle~\cite{GTV}*{Lemma~2.3} we obtain
corresponding $X_{s,b}$-estimates. A soft argument dealing with low frequencies
allows us to infer that
\begin{equation}\label{StrX}
  \|u\|_{L_t^pL^q_{xy}} \lesssim \|u\|_{X_{-\frac{1}{p}, b}}
\end{equation}
if $p>2$, $\displaystyle \frac{2}{p}+\frac{2}{q}=1$, and $\displaystyle b>
\frac{1}{2}$. Now we are prepared to prove the central bilinear estimate of this
section, which (inserted into the framework of Bourgain's method) leads to
Theorem 1.

\begin{prop}
  Let $ s>-\frac34$ and $ b' \le - \frac38$ as well as $ b'<s+\frac14$. Then for
  all $ b> \frac{1}{2}$ the estimate
  \begin{equation}\label{keycont}
    \|B(u,v)\|_{X_{s,b'}} \lesssim \|u\|_{X_{s,b}}\|v\|_{X_{s,b}}
  \end{equation}
  holds true.
\end{prop}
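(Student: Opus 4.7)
By Parseval, as described in Section~3, estimate~\eqref{keycont} is equivalent to
$$
\|\langle\tau-\varphi(\xi,\eta)\rangle^{b'}\langle(\xi,\eta)\rangle^{s}I_{f,g}\|_{L^{2}_{\xi\eta\tau}}\lesssim\|f\|_{L^{2}}\|g\|_{L^{2}}
$$
for nonnegative $f,g\in L^{2}$. I set $\theta:=-b'\in[\tfrac{3}{8},\tfrac{1}{2})$ and apply the resonance identity~\eqref{resonance} in the form
$$
|m(\xi_{1},\xi_{2},\eta_{1},\eta_{2})|\lesssim|(\xi,\eta)|^{1-\theta}|(\xi_{1},\eta_{1})|^{-\theta}|(\xi_{2},\eta_{2})|^{-\theta}|r|^{\theta}.
$$
Writing $r=\sigma-\sigma_{1}-\sigma_{2}$ with $\sigma=\tau-\varphi(\xi,\eta)$ and $\sigma_{i}=\tau_{i}-\varphi(\xi_{i},\eta_{i})$, one of $\langle\sigma\rangle,\langle\sigma_{1}\rangle,\langle\sigma_{2}\rangle$ controls $|r|^{\theta}$, producing three cases.

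In the first case (outer modulation dominant), $|r|^{\theta}\le\langle\sigma\rangle^{-b'}$ cancels the outer weight, and the estimate reduces (via Plancherel) to the bilinear Sobolev-type inequality
$$
\|J^{s+1-\theta}\bigl(J^{-\theta}u\cdot J^{-\theta}v\bigr)\|_{L^{2}_{t,x,y}}\lesssim\|u\|_{X_{s,b}}\|v\|_{X_{s,b}},
$$
where $J^{\alpha}$ is the Bessel-potential operator $\langle D_{x,y}\rangle^{\alpha}$. When $s+1-\theta\ge 0$ I distribute the outer derivative via $\langle(\xi,\eta)\rangle^{s+1-\theta}\lesssim\langle(\xi_{1},\eta_{1})\rangle^{s+1-\theta}+\langle(\xi_{2},\eta_{2})\rangle^{s+1-\theta}$; when $s+1-\theta<0$ the operator $J^{s+1-\theta}$ is bounded on $L^{2}$ and may be dropped. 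In either sub-case, H\"older's inequality $L^{4}\cdot L^{4}\hookrightarrow L^{2}$ combined with the Strichartz embedding~\eqref{StrX} at $p=q=4$ reduces matters to $s+\tfrac{3}{4}-2\theta\le s$ and $-\theta-\tfrac{1}{4}\le s$, which are exactly $b'\le-\tfrac{3}{8}$ and $b'<s+\tfrac{1}{4}$.

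In the two remaining cases, where one of the input modulations is largest, absorbing $|r|^{\theta}$ into the corresponding $\langle\sigma_{i}\rangle^{b}$-weight lowers its exponent to $b-\theta<\tfrac{1}{2}$, below the threshold required by~\eqref{StrX}. The standard workaround is to pass to the trilinear formulation by testing against $w\in X_{-s,\theta}$ via duality and to choose a H\"older configuration in which the factor with the weakened modulation weight is placed directly in $L^{2}_{t,x,y}$ (using only Plancherel), while the remaining input and the test function are estimated in $L^{4}_{t,x,y}$ via~\eqref{StrX}. The symmetry $u\leftrightarrow v$ reduces the third case to the second, and the derivative balance again yields the two inequalities $b'\le-\tfrac{3}{8}$ and $b'<s+\tfrac{1}{4}$.

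The principal technical obstacle I expect is the low-frequency region, where the negative powers $|(\xi_{i},\eta_{i})|^{-\theta}$ produced by the resonance bound are singular. There one must abandon \eqref{resonance} and revert to the trivial multiplier bound $|m|\lesssim|(\xi,\eta)|$, and then verify that the low-frequency contribution is integrable and dyadically summable. Assembling this low/high decomposition consistently across all three cases, with a small $\varepsilon$-loss in the dyadic sums, is what forces the strict inequalities $s>-\tfrac{3}{4}$ and $b'<s+\tfrac{1}{4}$; the threshold $b'\le-\tfrac{3}{8}$ becomes tight only through the combined constraints of the three cases, since the $L^{2}$-placement forced in Cases~B and~C requires $\theta$ to be at least $\tfrac{3}{8}$ to compensate for the $L^{4}$-Strichartz derivative loss of $\tfrac{1}{4}$ on the two remaining factors.
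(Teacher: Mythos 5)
Your overall architecture is the same as the paper's (resonance identity, case split according to which of the three modulations is maximal, $L^4$-Strichartz and its dual), and your treatment of the case where the output modulation $\langle\tau-\varphi(\xi,\eta)\rangle$ dominates matches the paper's Subcase~1.3 in substance. However, the plan for the two remaining cases has a genuine gap, and the low/high-frequency bookkeeping is glossed over in a way that hides where the constraint $b'\le-\frac38$ really enters.

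The gap: in the cases where $\langle\tau_1-\varphi(\xi_1,\eta_1)\rangle$ (say) is maximal, you dualize against $w\in X_{-s,-b'}$ and propose to put $v$ and $w$ in $L^4_{xyt}$ via~\eqref{StrX}. This fails for $w$: the modulation exponent in $X_{-s,-b'}$ is $-b'<\frac12$, whereas~\eqref{StrX} (and hence the resulting $X_{s,b}\hookrightarrow L^4$ embedding) requires a modulation exponent strictly greater than $\frac12$. With $\theta=-b'$ the first factor's modulation has been reduced to $b-\theta=b+b'\approx 0$, so the only factor that comes with a full $b>\frac12$ modulation weight is $v$, and neither $u$ nor $w$ can be fed into~\eqref{StrX}. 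The paper's fix — which your plan is missing — is to exploit $\langle\tau-\varphi(\xi,\eta)\rangle\lesssim\langle\tau_1-\varphi(\xi_1,\eta_1)\rangle$ to exchange the output weight $\langle\sigma\rangle^{b'}$ for $\langle\sigma\rangle^{-b}$ (at the cost of consuming part of the $\sigma_1$-gain), landing in $\|\cdot\|_{X_{0,-b}}$; only then can the dual Strichartz embedding $L^{4/3}\hookrightarrow X_{\frac14,-b}$ be applied, which is what actually puts the "test function" in a usable space. Even after that, the paper needs to split further according to whether $|(\xi,\eta)|$ or $|(\xi_2,\eta_2)|$ is the largest frequency; in the second sub-subcase the simple $L^{4}\cdot L^{4}\cdot L^{2}$ H\"older configuration does not balance, and the paper resorts to a time embedding into $L^{1+}_tL^2_{xy}$ together with the almost-endpoint estimate $\|I^{\frac12-}U_\varphi u_0\|_{L^{2+}_tL^{\infty-}_{xy}}\lesssim\|u_0\|_{L^2}$. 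This configuration is absent from your plan.

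Two smaller points. First, you attribute the threshold $b'\le-\frac38$ primarily to Cases~B/C; in the paper it enters through the mixed low/high-frequency interactions (Subcases~1.1, 2.1, 2.2), where one takes $\theta=\frac38$ in~\eqref{resonance} and needs $\langle\sigma\rangle^{\theta+b'}\le 1$, respectively a weight $\Lambda^{\frac38+b+b'}$ with exponent $\le b$. (After the paper's reduction to $s\le-\frac58$, the "distribute the outer derivative" branch you use in Case~A never occurs, so that route to the $\frac38$-threshold is a genuinely different bookkeeping — not wrong in itself, but your attribution of the constraint is misleading.) Second, for the low-frequency region it is not enough to "revert to the trivial bound $|m|\lesssim|(\xi,\eta)|$" globally: the paper only uses the trivial bound when \emph{both} input frequencies are $\le 1$ (Case~0); in the mixed cases it still relies on~\eqref{resonance}, combined with a Sobolev embedding in space to handle the singular factor $|(\xi_i,\eta_i)|^{-\theta}$ on the low-frequency input. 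Without these pieces filled in, the proposal as written does not close.
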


\begin{proof}
  Without loss of generality we assume $s \le - \frac58$ so that $s\le -1-b'$.
  The proof consists of a case by case discussion, essentially depending on
  which of the weights
  \begin{equation}\label{modulations}
 	\lb \tau-\varphi(\xi,\eta) \rangle, \quad \langle \tau_1-\varphi(\xi_1,\eta_1) \rangle, \quad\langle \tau_2-\varphi(\xi_2,\eta_2) \rb
  \end{equation}
  is the largest and thus controls the resonance function. We start with a
  trivial low frequency issue.\\
  \quad \\
  \emph{Case 0:} $|(\xi_1,\eta_1)|\le 1$ and $|(\xi_2,\eta_2)|\le 1$. In this
  case the multiplier $m$ is bounded, so that the left hand side
  of~\eqref{keycont} can be estimated
  \[
  \|uv\|_{L^2_{xyt}}\le \|u\|_{L^4_{xyt}}\|v\|_{L^4_{xyt}}\lesssim \|u\|_{X_{0b}}\|v\|_{X_{0b}}\lesssim \|u\|_{X_{s,b}}\|v\|_{X_{s,b}},
  \]
  where we have used~\eqref{StrX} and the support restriction of $\widehat{u}$ and
  $\widehat{v}$ to $\{|(\xi,\eta)| \le 1\}$.\\
  \quad \\
  \emph{Case 1:} $\langle \tau-\varphi(\xi,\eta) \rangle$ is maximal.\\
  \emph{Subcase 1.1:} $|(\xi_1,\eta_1)|\le 1 \le |(\xi_2,\eta_2)|$. In this case
  we have $|(\xi,\eta)| \sim |(\xi_2,\eta_2)|$ which reduces the consideration
  to the case $s=0$. We use~\eqref{resonance} with $\theta = \frac38$ to obtain
  $$\|B(u,v)\|_{X_{0b'}} \lesssim \|I^{-\theta}u\|_{L^4_{xyt}}\|I^{\frac14}v\|_{L^4_{xyt}},$$
  where by~\eqref{StrX} the second factor is bounded by $\|v\|_{X_{0b}}$. For
  the first factor we use a Sobolev embedding and the fact that $\widehat{u}$ is
  restricted to $\{|(\xi,\eta)| \le 1\}$ to see that
  $$\|I^{-\theta}u\|_{L^4_{xyt}} \lesssim \|u\|_{L_t^4L^2_{xy}} \lesssim \|u\|_{X_{0b}},$$
  where in the last step a time embedding was applied.\\
  \emph{Subcase 1.2:} $|(\xi_2,\eta_2)|\le 1 \le |(\xi_1,\eta_1)|$ needs no
  discussion by symmetry.\\
  \emph{Subcase 1.3:} $|(\xi_1,\eta_1)|\ge 1$ and $|(\xi_2,\eta_2)| \ge 1$. We
  use~\eqref{resonance} with $\theta = -b'$ and without loss of generality $s+1+b'\le0$ to infer
  that the contribution of this case is bounded by
  $$\|(I^{b'}u)(I^{b'}v)\|_{L^2_{xyt}}\le \|I^{b'}u\|_{L^4_{xyt}}\|I^{b'}v\|_{L^4_{xyt}}\lesssim \|u\|_{X_{s,b}}\|v\|_{X_{s,b}},$$
  the latter by~\eqref{StrX} and the assumption $b'-\frac14 < s$.

  \quad \\
  \emph{Case 2:} $\langle \tau_1-\varphi(\xi_1,\eta_1) \rangle$ is maximal. \\
  \emph{Subcase 2.1:} $|(\xi_1,\eta_1)|\le 1 \le |(\xi_2,\eta_2)|$. Because of
  $|(\xi,\eta)| \sim |(\xi_2,\eta_2)|$ we may consider $s=0$ only. We write
  $\Lambda^b=\F^{-1}\langle \tau-\varphi(\xi,\eta) \rangle^b \F$ and
  use~\eqref{resonance} with $\theta = \frac38$ to see that the contribution of
  this region is bounded by
  \begin{align*}
    \|I^{\frac14}((I^{-\frac{3}{8}}\Lambda^{\frac{3}{8}+b+b'}u)I^{1-\frac34-\frac14}v)\|_{X_{0,-b}}
    \lesssim \|(I^{-\frac{3}{8}}\Lambda^{\frac{3}{8}+b+b'}u)v\|_{L^{\frac43}_{xyt}} \\
    \lesssim \|I^{-\frac{3}{8}}\Lambda^{\frac{3}{8}+b+b'}u\|_{L^2_tL^4_{xy}}\|v\|_{L^4_tL^2_{xy}}
    \lesssim \|\Lambda^bu\|_{L^2_{xyt}}\|v\|_{X_{0,\frac14}}\le\|u\|_{X_{0,b}}\|v\|_{X_{0,b}}.
  \end{align*}
  Here we have used the dual version of the $L^4$-Strichartz-type estimate,
  H\"older's inequality and Sobolev-type embeddings in space (first factor) and
  time (second factor).\\
  \emph{Subcase 2.2:} $|(\xi_2,\eta_2)|\le 1 \le |(\xi_1,\eta_1)|$. Considering
  again $s=0$ and choosing $\theta = \frac38$ in~\eqref{resonance} we get the
  bound
  $$\|\Lambda^{\frac{3}{8}+b+b'}u\|_{L^2_{xyt}}\|I^{-\frac38}v\|_{L^4_{xyt}} \lesssim \|u\|_{X_{0,b}}\|v\|_{X_{0,b}}.$$
  \emph{Subcase 2.3:} $|(\xi_1,\eta_1)|\ge 1$ and $|(\xi_2,\eta_2)| \ge 1$. Here
  we choose $\theta = b'$ in~\eqref{resonance}, remember that $s+1+b'\le0$ and
  obtain the bound
  \begin{equation}\label{bd}
    \|(I^{b'}\Lambda^bu)(I^{b'}v)\|_{X_{0,-b}}.
  \end{equation}
  Now there are two possibilities:\\
  \emph{2.3.1:} $|(\xi_1,\eta_1)| \lesssim |(\xi,\eta)|$. We use the dual
  version of the $L^4$-Strichartz-type estimate, H\"older, and the estimate
  itself for the second factor to get
  $$\eqref{bd} \lesssim \|(I^{b'-\frac14}\Lambda^bu)(I^{b'}v)\|_{L^{\frac43_{xyt}}}\lesssim \|I^{b'-\frac14}\Lambda^{b}u\|_{L^2_{xyt}}\|I^{b'}v\|_{L^4_{xyt}}\lesssim \|u\|_{X_{s,b}}\|v\|_{X_{s,b}}.$$
  \emph{2.3.2:} $|(\xi_1,\eta_1)| \lesssim |(\xi_2,\eta_2)|$. We start with a
  time embedding, apply Hölder's inequality, a Sobolev embedding in space and
  the almost endpoint version of the Strichartz-type estimate to obtain
  \begin{align*}
    \eqref{bd} & \lesssim \|(I^{b'-\frac14+}\Lambda^bu)(I^{b'+\frac14-}v)\|_{L_t^{1+}L^2_{xy}} \\
    & \lesssim \|I^{b'-\frac14+}\Lambda^{b}u\|_{L_t^2L^{2+}_{xy}}\|I^{b'}v\|_{L_t^{2+}L^{\infty-}_{xy}}\lesssim \|u\|_{X_{s,b}}\|v\|_{X_{s,b}}.
  \end{align*}
  The third case, where $\langle \tau_2-\varphi(\xi_2,\eta_2) \rangle$ is
  maximal, needs no consideration by symmetry.
\end{proof}

\section{The periodic case}

To prove a bilinear Strichartz-type estimate for the periodic problem, we rely on the following number theoretic result due to W.~M.~Schmidt:

\begin{theorem*}[Schmidt]
	Call $n(\mathfrak{C}, N)$ the number of integral points on the curve $\mathfrak{C} = \set{(x, f(x)) \mid x \in \mathbb{R}}$ in an arbitrary square of side length $N \ge 1$. Then, if $f''$ exists and is weakly monotonic, the estimate
	\begin{equation}\label{generalNTestimate}
		n(\mathfrak{C}, N) \le c(\e) N^{\gamma + \e}
	\end{equation}
	holds true for $\gamma = \frac{3}{5}$ with a constant $c(\e)$ independent of the particular curve.
\end{theorem*}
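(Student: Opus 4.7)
The strategy I would follow is the Bombieri--Pila / Swinnerton-Dyer determinant method, specialised to the regularity class in which Schmidt worked. By a translation one may assume the square is $[0,N]^2$ and count integer pairs $(a,f(a))$ on the graph, so that $a$ ranges over an interval of length at most $N$. I would partition this interval into sub-arcs of length roughly $M \sim N^{\beta}$ for a parameter $\beta \in (0,1)$ to be fixed later; there are $O(N^{1-\beta})$ such sub-arcs, and it suffices to bound the number of lattice points on a single one.

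On each sub-arc, given alleged lattice points $(x_i,y_i)_{i=1}^{k}$ on $\mathfrak{C}$, I would form the $k\times k$ matrix whose entries are $x_i^{\alpha_j}y_i^{\beta_j}$, with $(\alpha_j,\beta_j)$ running over monomials of total degree at most $d$ and $k=\binom{d+2}{2}$. Its determinant $\Delta$ is an integer. Using $y_i=f(x_i)$ and expanding each $f(x_i)^{\beta_j}$ by Taylor's theorem about a base point in the sub-arc, one applies column reductions to kill the purely polynomial part and is left with a quantity bounded by a power of $M$ multiplied by sup-norm estimates for a finite collection of derivatives and divided differences of $f$ on the sub-arc. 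Here the weak monotonicity of $f''$ is the essential input: it yields the required control of second divided differences of $f$ --- and, by integration, of higher Taylor remainders --- without appeal to stronger smoothness.

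If the resulting upper bound for $|\Delta|$ is less than $1$, then $\Delta=0$ and all the lattice points lie on an algebraic curve of degree at most $d$; B\'ezout's theorem, applied after discarding any irreducible component that coincides locally with $\mathfrak{C}$, bounds their number by $O_d(1)$. Choosing $d=2$ (conics) and optimising $\beta$ so that the determinant bound is just saturated at $M \sim N^{2/5}$ yields the claimed exponent $\gamma=\tfrac{3}{5}$; the arbitrarily small $\e$ absorbs logarithmic and boundary losses.

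The main obstacle is that the weakness of the regularity hypothesis --- merely weak monotonicity of $f''$ --- caps the effective polynomial degree at $d=2$; under $C^\infty$ or real-analytic hypotheses as in Bombieri--Pila one would let $d\to\infty$ and recover the much better exponent $\tfrac{1}{2}$. A secondary technical point is the geometric organisation of the sub-arcs: one must split further where $f''$ vanishes or changes sign, but monotonicity of $f''$ guarantees only boundedly many such splittings, which does not affect the final count.
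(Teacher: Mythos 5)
The paper does not prove this theorem at all: it is quoted verbatim and accompanied only by the reference ``See \cite{Schmidt}*{Theorem~1}.'' There is therefore no in-paper argument to compare against; you have been asked to supply a proof for a result the authors take off the shelf.

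Your sketch is the Swinnerton--Dyer/Bombieri--Pila determinant method: split into sub-arcs of length $M\sim N^{\beta}$, use a $\binom{d+2}{2}\times\binom{d+2}{2}$ matrix of monomials to detect when the lattice points on a sub-arc lie on a degree-$d$ algebraic curve, bound the determinant by Taylor-expanding $f$ about a base point, then take $d=2$ and optimise $\beta=2/5$. This is not Schmidt's own argument (his 1985 paper predates Bombieri--Pila and rests on a divided-difference scheme tailored to the weak hypothesis), but it is in principle an acceptable alternative route \emph{provided} the determinant bound can actually be obtained under the stated regularity. That is exactly where your sketch breaks down. With $d=2$ the columns include $y^2=f(x)^2$ and $xy=xf(x)$; after eliminating the purely polynomial part spanned by $\{1,x,x^2,\dots\}$ you are left with contributions whose size is governed by Taylor remainders of $f$, $xf$ and $f^2$ of orders up to four or five, and the Vandermonde/Wronskian bound for a $6\times 6$ determinant of this type is normally expressed through divided differences (equivalently derivatives) $f^{(3)},f^{(4)},f^{(5)}$. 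Under the hypothesis that only $f''$ exists and is weakly monotone, these simply need not exist. Your remark that weak monotonicity of $f''$ controls ``higher Taylor remainders by integration'' has the direction backwards: integrating $f''$ controls $f'$ and $f$, not the higher-order quantities the column reduction requires. Closing this gap --- replacing the naive derivative bounds by an argument that uses only the monotonicity (and the rationality of divided differences of integer values) and yields a constant $c(\e)$ \emph{uniform over all such $f$}, even when $f''$ is unbounded --- is precisely Schmidt's contribution, and it is the step your sketch passes over. A secondary, smaller issue is the appeal to ``Bézout after discarding any irreducible component that coincides locally with $\mathfrak{C}$'': for a non-algebraic $\mathfrak{C}$ this does not make sense as stated, and the standard resolution (count lattice points on the detected conic arc by an auxiliary elementary argument, or iterate) should be spelled out.
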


\noindent See \cite{Schmidt}*{Theorem~1}. We will apply this estimate to
\begin{enumerate}[(i)]
	\item classical hyperbolas described by
	\[
	a(x^2 - y^2) + 2bxy = c \qquad (c \not= 0)
	\]
	and to
	\item cubic hyperbola-like curves of the form
	\[
	(x + a)(x^2 - y^2) = 2(y + b)xy,
	\]
\end{enumerate}
where $a$, $b$ and $c$ are parameters. Schmidt's Theorem applies to these curves, unless they degenerate (partially) into straight lines. It is possible that sharper estimates with lower exponents $\gamma$ hold true for the curves in (i) and (ii). Thus we decided to state and prove several subsequent estimates depending on the exponent $\gamma \in [0, 1)$, assuming~\eqref{generalNTestimate} to be applicable.

Next we define the bilinear projection operator $Q$ by
\[
\widehat{Q(u, v)}(\xi, \eta, \tau) = \sum_* (1 - \delta_{\xi,0}\delta_{\xi_1,0}) \hat{u}(\xi_1, \eta_1, \tau_1) \hat{v}(\xi_2, \eta_2, \tau_2)
\]
where $\sum_*$ indicates summation under the constraint introduced by the convolution $(\xi, \eta, \tau) = (\xi_1, \eta_1, \tau_1) + (\xi_2, \eta_2, \tau_2)$. $Q$ acts only on the first space variable.

\begin{prop}
	Let $\gamma \in [0,1)$, such that~\eqref{generalNTestimate} holds for the nondegenerate curves of type (i) and (ii). For $B_R \subset \R^2$ a circle with radius $R > 0$ and arbitrary center and $u_0, v_0 \in L^2_{xy}$, where $\supp{\hat{u}_0} \subset B_R$, one has
	\begin{equation}\label{bilinOrig}
		\|Q(U_\varphi u_0, U_\varphi v_0)\|_{L^2_{xyt}} \lesssim R^{\frac{\gamma}{2}+} \|u_0\|_{L^2_{xy}} \|v_0\|_{L^2_{xy}}.
	\end{equation}
\end{prop}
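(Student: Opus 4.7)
The natural approach is to reduce the estimate to counting lattice points on a family of conics. Writing $\Phi(\xi_1, \eta_1) := \varphi(\xi_1, \eta_1) + \varphi(\xi - \xi_1, \eta - \eta_1) \in \Z$, the $(x,y)$-Fourier coefficient of $F := Q(U_\varphi u_0, U_\varphi v_0)$ at $(\xi, \eta)$ is a $Q$-restricted sum over $(\xi_1, \eta_1)$ of the terms $e^{-it\Phi(\xi_1, \eta_1)} \hat{u}_0(\xi_1, \eta_1)\hat{v}_0(\xi-\xi_1, \eta-\eta_1)$. Since $\Phi$ is integer valued, Plancherel in space and orthogonality of the distinct frequencies in time yield
\[
\|F\|_{L^2_{xyt}}^2 \sim \sum_{\xi, \eta, \mu} \biggl|\sum_{\Phi = \mu} \hat{u}_0(\xi_1, \eta_1)\hat{v}_0(\xi_2, \eta_2)\biggr|^2.
\]
Applying Cauchy-Schwarz to the inner sum and exchanging the order of summation reduces the proof to the counting bound
\[
N(\xi, \eta, \mu) := \#\bigl\{ (\xi_1, \eta_1) \in B_R \cap \Z^2 : \Phi(\xi_1, \eta_1) = \mu \bigr\} \lesssim R^{\gamma + \e},
\]
uniformly in $(\xi, \eta, \mu)$ on the support of the $Q$-modified multiplier.

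To analyse $\{\Phi = \mu\}$ I would expand $\Phi$ explicitly and translate by $(\xi/2, \eta/2)$, which cancels the linear terms and turns the equation into
\[
\xi(x^2 - y^2) - 2\eta x y = c, \qquad c = \frac{4\mu - \varphi(\xi, \eta)}{12},
\]
a curve of type (i) with parameters $a = \xi$, $b = -\eta$. Since $(\xi, \eta) \neq (0,0)$ on the support of the multiplier, the underlying quadratic form is nondegenerate; whenever $c \neq 0$, the level set is a genuine hyperbola. A harmless rescaling by $2$ absorbs the possibly half-integer shift that occurs when $\xi$ or $\eta$ is odd, and Schmidt's theorem then yields $N \lesssim R^{\gamma + \e}$ as required.

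The main obstacle is the degenerate case $c = 0$ (i.e.\ $\mu = \varphi(\xi, \eta)/4$), in which the conic splits into a pair of lines through $(\xi/2, \eta/2)$ with slopes $(\eta \pm \sqrt{\xi^2 + \eta^2})/\xi$ (suitably interpreted when $\xi = 0$). For irrational slopes each line carries only $O(1)$ lattice points and the Schmidt-type bound remains dominant. The truly harmful situation is $\xi = 0$, $\mu = 0$: here $\Phi = \mu$ factorises as $3\xi_1 \eta(\eta - 2\eta_1) = 0$, splitting into the axis $\xi_1 = 0$ and the line $\eta_1 = \eta/2$; the axis $\xi_1 = 0$ carries $\sim R$ lattice points, and precisely this contribution is removed by $Q$. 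For the remaining degenerate pieces---for instance the line $\eta_1 = \eta/2$ when $\eta$ is even, or the rational-slope lines coming from Pythagorean pairs $\xi^2 + \eta^2 = m^2$---I would abandon the lossy supremum-of-$N$ step and estimate $\hat{F}$ directly on each exceptional line. There the contribution is a bilinear convolution in a single variable, to which a line-wise Cauchy-Schwarz and subsequent summation over the transverse integer variable (by Plancherel) gives a bound $\lesssim \|u_0\|_{L^2}\|v_0\|_{L^2}$ with no $R$-loss. Combining Schmidt's bound on the generic curves with this slice-wise treatment of the exceptional lines yields the claimed estimate.
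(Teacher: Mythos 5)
Your reduction to lattice-point counting on type-(i) conics — Plancherel in $(x,y)$, orthogonality of the integer frequencies in $t$, Cauchy--Schwarz on the inner sum, and completing the square via the shift $(\xi/2,\eta/2)$ — agrees with the paper's treatment of its term $\RN{1}$, the contribution where your parameter $c$ is nonzero. One point you gloss over even there: Schmidt's estimate~\eqref{generalNTestimate} counts points inside a square of side $N$, but nothing confines $(\xi,\eta)$ (and hence $(\xi_2,\eta_2)$) to a region of size $\sim R$; the paper handles this with a tiling by squares $Q_\alpha$ of side $\sim R$ and an almost-orthogonality argument. That is a repairable omission.

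The genuine gap is in the degenerate case $c=0$, which is where the actual work lies. Your diagnosis is correct up to a point: the conic splits into two lines through $(\xi/2,\eta/2)$, the axis $\xi_1=0$ is removed by $Q$, lines of irrational slope carry $O(1)$ lattice points, and the danger comes from Pythagorean pairs $\xi^2+\eta^2=m^2$, whose rational-slope lines can carry $\sim R$ lattice points. But the proposed cure --- drop the pointwise counting bound, run a line-wise Cauchy--Schwarz, then ``sum over the transverse integer variable by Plancherel'' --- does not go through as written: the exceptional lines have a direction depending on $(\xi,\eta)$, so they are neither parallel nor indexed by a single transverse variable, and Plancherel gives you nothing. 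If you unpack the line-wise Cauchy--Schwarz and try to close the sum over $(\xi,\eta)$, you find you need to bound, for \emph{fixed} $(\xi_1,\eta_1)$, the number of $(\xi,\eta)$ in a box of size $R$ that place $(\xi_1,\eta_1)$ on the degenerate conic. That is exactly the dual count $\Sigma_3$ the paper introduces: with $(x,y)=(\xi-2\xi_1,\eta-2\eta_1)$ the constraint becomes the type-(ii) cubic $(x+2\xi_1)(x^2-y^2)=2(y+2\eta_1)xy$ in $(\xi,\eta)$, and the decisive observation is that when this cubic degenerates to a line its slope is $\pm 1/\sqrt{3}$, always irrational, so the Pythagorean pathology never reappears in the dual picture and Schmidt applies uniformly. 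Your proposal never identifies the type-(ii) curve nor proves this dual bound; at the crucial step it silently assumes what must be shown, and without the duality trick the rational-slope lines remain a genuine obstruction.
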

\begin{remark*}
	Without the projector $Q$ the best possible estimate is
	\[
		\|U_\varphi u_0 U_\varphi v_0\|_{L^2_{xyt}} \lesssim R^{\frac{1}{2}} \|u_0\|_{L^2_{xy}} \|v_0\|_{L^2_{xy}},
	\]
	which can be seen by the example $\hat{u}_0(\xi,\eta) = \hat{v}_0(\xi,\eta) = \delta_{\xi, 0}\chi_{[-R,R]}(\eta)$. But~\eqref{bilinOrig} will work in our application to the nonlinearity, since the bilinear Fourier multiplier $m$ introduced at the beginning of Section 3 vanishes, if $\xi = \xi_1 = \xi_2 = 0$.
\end{remark*}

\begin{proof}
	We split
	\begin{align*}
	\F_{xyt}Q(U_\varphi u_0, U_\varphi v_0)(\xi, \eta, \tau) &= \sum_* (1 - \delta_{\xi,0}\delta_{\xi_1,0}) \delta_{\tau,\varphi(\xi_1, \eta_1) + \varphi(\xi_2, \eta_2)} \hat{u}_0(\xi_1, \eta_1) \hat{v}_0(\xi_2, \eta_2) \\\nonumber &= \RN{1} + \RN{2},
	\end{align*}
	where for $\RN{1}$ we assume that $\tau - \frac{\xi^3}{4} + \frac{7}{4}\xi\eta^2 \not= 0$. This term can be estimated by Cauchy-Schwarz
	\begin{align}\label{afterFTCS}
		\|\RN{1}\|_{L^2_{\xi\eta\tau}}^2 &\lesssim \sum_{(\xi, \eta,\tau) \in \Z^3} \Sigma_1(\xi, \eta, \tau) \sum_* \delta_{\tau, \varphi(\xi_1, \eta_1) + \varphi(\xi_2, \eta_2)} |\hat{u}_0(\xi_1, \eta_1) \hat{v}_0(\xi_2, \eta_2)|^2,
	\end{align}
	noting that $\delta_{\tau, \varphi(\xi_1, \eta_1) + \varphi(\xi_2, \eta_2)}^2 = \delta_{\tau, \varphi(\xi_1, \eta_1) + \varphi(\xi_2, \eta_2)}$ and $\hat{u_0} = \chi_{R} \hat{u}_0$ where we define
	\[
		\Sigma_1(\xi, \eta, \tau) = \sum_* \delta_{\tau, \varphi(\xi_1, \eta_1) + \varphi(\xi_2, \eta_2)} \chi_R(\xi_1, \eta_1).
	\]

	If we are now able to prove an estimate of the type $\Sigma_1(\xi, \eta, \tau) \lesssim R^{\gamma+}$ we can further bound
	\begin{align*}
	\eqref{afterFTCS}&\lesssim R^{\gamma+} \sum_{(\xi, \eta) \in \Z^2} \sum_* \left(\sum_{\tau \in \Z} \delta_{\tau, \varphi(\xi_1, \eta_1) + \varphi(\xi_2, \eta_2)}\right) |\hat{u}_0(\xi_1, \eta_1) \hat{v}_0(\xi_2, \eta_2)|^2\\
	&\le R^{\gamma+} \|u_0\|_{L^2_{xy}}^2 \|v_0\|_{L^2_{xy}}^2
	\end{align*}
	which is our proposition for the contribution by $\RN{1}$. In order to bound $\Sigma_1(\xi, \eta, \tau)$ we use the substitution $\xi_1 = x + \frac{\xi}{2}$ and $\eta_1 = y + \frac{\eta}{2}$. A lengthy but elementary calculation shows that then
	\[
		\tau - \varphi(\xi_1, \eta_1) - \varphi(\xi_2, \eta_2) = \tau - \frac{1}{4}\xi^3 + \frac{7}{4} \xi \eta^2 + 3\xi(x^2 - y^2) - 6\eta xy =: K(\xi, \eta, \tau, x, y).
	\]
	One immediately identifies this to be a curve of type (i) in the variables $x$ and $y$, $(\xi, \eta)$ and $\tau$ only play the role of parameters. The sum to be estimated now reads
	\[
		\Sigma_1(\xi, \eta, \tau) = \sum_{(x,y)\in\Z^2} \delta_{0, K(\xi, \eta, \tau, x, y)} \chi_{2R}(2x + \xi, 2\eta  + y)
	\]
	where, because of the substitution, we have had to double the radius of the circle. Now the general result~\eqref{generalNTestimate} about curves is applicable, since this sum merely counts the integral points within some disc of radius $\lesssim R$ on the hyperbola $K$. Hence, as desired, $\Sigma_1(\xi, \eta, \tau) \lesssim R^{\gamma+}$ and this completes the proof for $\RN{1}$.
	
	The second contribution is, with $(x, y) = (\xi - 2\xi_1, \eta - 2\eta_1)$,
	\begin{align*}
		\RN{2} &= \delta_{\tau, \frac{\xi^3}{4} + \frac{7}{4}\xi\eta^2} \sum_* \delta_{(x + 2\xi_1)(x^2 - y^2), 2(y + 2\eta_1) xy} (1 - \delta_{\xi,0}\delta_{\xi_1,0}) \hat{u}_0(\xi_1, \eta_1) \hat{v}_0(\xi_2, \eta_2)\\
		& =: \delta_{\tau, \frac{\xi^3}{4} + \frac{7}{4}\xi\eta^2} \cdot \Sigma_2(\xi, \eta).
	\end{align*}
	To estimate $\|\RN{2}\|_{L^2_{\xi\eta\tau}} = \|\Sigma_2\|_{L^2_{\xi\eta}}$ we decompose $\mathbb{R}^2 = \sum_{\alpha \in \mathbb{Z}^2} Q_\alpha$, where $Q_\alpha$ are disjoint squares of side length $2R$, so that
	\begin{equation}
		\|\RN{2}\|_{L^2_{\xi\eta\tau}}^2 = \sum_{\alpha \in \mathbb{Z}^2} \|\chi_{Q_\alpha} \Sigma_2\|_{L^2_{\xi\eta}}^2.
	\end{equation}
	Next we estimate $\|\chi_{Q_\alpha} \Sigma_2\|_{L^2_{\xi\eta}}$ for $\alpha \in \mathbb{Z}^2$ fixed by duality. For that purpose let $\psi \in L^2_{\xi\eta}$ with $\|\psi\|_{L^2_{\xi\eta}} \le 1$. Then
	\begin{align}\label{dualityEst}
		\langle \psi, \chi_{Q_\alpha} \Sigma_2 \rangle_{L^2_{\xi\eta}} &= \sum_{(\xi,\eta) \in \Z^2} \psi(\xi,\eta) \chi_{Q_\alpha}(\xi,\eta) \sum_* \hat{u}_0(\xi_1, \eta_1) \hat{v}_0(\xi_2, \eta_2) \times \ldots \\ \nonumber &\qquad\ldots  (1 - \delta_{\xi,0}\delta_{\xi_1,0}) \delta_{(x + 2\xi_1)(x^2 - y^2), 2(y + 2\eta_1) xy} \\
		\nonumber &= \sum_{(\xi_1, \eta_1) \in \Z^2} \hat{u}_0(\xi_1, \eta_1) \sum_{(\xi,\eta) \in \mathbb{Z}^2} \chi_{Q_\alpha}(\xi,\eta) \psi(\xi,\eta) \hat{v}_0(\xi_2, \eta_2) \times\ldots \\ \nonumber & \qquad\ldots (1 - \delta_{\xi,0}\delta_{\xi_1,0}) \delta_{(x + 2\xi_1)(x^2 - y^2), 2(y + 2\eta_1) xy}.
	\end{align}
	An application of Cauchy-Schwarz' inequality to the inner sum gives
	\begin{align}\label{squareContrib}
		& \sum_{(\xi,\eta) \in \mathbb{Z}^2} \chi_{Q_\alpha}(\xi,\eta) \psi(\xi,\eta) (1 - \delta_{\xi,0}\delta_{\xi_1,0}) \delta_{(x + 2\xi_1)(x^2 - y^2), 2(y + 2\eta_1) xy} \hat{v}_0(\xi_2, \eta_2) \\ \nonumber & \qquad \le \Sigma_3(\xi_1, \eta_1)^\frac{1}{2} \cdot \left( \sum_{(\xi,\eta) \in \mathbb{Z}^2} |\psi(\xi,\eta)|^2 |\hat{v}_0(\xi - \xi_1, \eta - \eta_1)|^2 \right)^\frac{1}{2}
	\end{align}
	Where we have shortened the first factor to
	\begin{equation}\label{type2count}
	\Sigma_3(\xi_1, \eta_1) := \sum_{(\xi,\eta) \in \mathbb{Z}^2} (1 - \delta_{\xi,0}\delta_{\xi_1,0}) \delta_{(x + 2\xi_1)(x^2 - y^2), 2(y + 2\eta_1) xy} \chi_{Q_\alpha}(\xi,\eta),
	\end{equation}
	the variables $(\xi_1, \eta_1)$ now appearing as parameters. Here again we must argue for an estimate of type $\Sigma_3(\xi_1, \eta_1) \lesssim R^{\gamma+}$, similar as to the above.
	In general, there are three kinds of solutions to the hyperbola-like curve of type (ii) appearing in this sum:
	\begin{enumerate}[(i)]
		\item If $\xi_1 = 0$ and $x = 0$, then an arbitrary pair $(\eta_1, y) \in \Z^2$ will complete a solution to $(x + 2\xi_1)(x^2 - y^2) = 2(y + 2\eta_1) xy$. Though since $x = \xi - 2\xi_1$ the factor involving the first Kronecker deltas causes these solutions to be disregarded in the count.
		
		\item In case $\xi_1\eta_1 \not= 0$ then $(x, y) = (-\frac{2}{3}\xi_1, -\frac{2}{9}\frac{\xi_1^2}{\eta_1})$ also gives a solution on the curve. Though since this is just a single point -- $(\xi_1, \eta_1)$ are fixed -- it may at most give a single 1 in our sum.
		
		\item Lastly, if $3x + 2\xi_1 \not= 0$ then
		\[
		y_\pm = \frac{\pm\sqrt{x^2(4\xi_1^2 + 4\eta_1^2 + 8\xi_1 x + 3x^2)} - 2\eta_1x}{3x+2\xi_1}
		\]
		gives a whole family of solutions depending on $x$. In order to ensure that Schmidt's Theorem is sufficient to give the required bound, we must ensure that if such a curve degenerates into a straight line, it has an irrational slope. Assuming $y_\pm$ does indeed describe a straight line we may calculate its slope as $\lim_{x \to \infty} \frac{y_\pm}{x} = \frac{\pm 1}{\sqrt{3}}$, which is irrational. In all other cases Schmidt's theorem delivers the required bound $\Sigma_3(\xi_1, \eta_1) \lesssim R^{\gamma+}$.
	\end{enumerate}
	
	Inserting this into~\eqref{squareContrib}, then into~\eqref{dualityEst} and applying Cauchy-Schwarz to the outer sum over $(\xi_1,\eta_1) \in \mathbb{Z}^2$ we arrive at
	\[
		\langle \psi, \chi_{Q_\alpha} \Sigma_2 \rangle_{L^2_{\xi\eta}} \lesssim R^{\frac{\gamma}{2}+} \|\hat{u}_0\|_{L^2_{\xi\eta}} \|\hat{v}_0\|_{L^2_{\xi\eta}}.
	\]
	Since in the above calculation we have $(\xi_1, \eta_1) \in B_R$ and $(\xi,\eta) \in Q_\alpha$, the variables $(\xi_2, \eta_2) = (\xi, \eta) - (\xi_1, \eta_1)$ are confined to a square $\tilde{Q}_\alpha$ of side length $4R$ containing $Q_\alpha - B_R$, so that in fact we can rely on the stronger estimates
	\[
		\langle \psi, \chi_{Q_\alpha} \Sigma_2 \rangle_{L^2_{\xi\eta}} \lesssim R^{\frac{\gamma}{2}+} \|\hat{u}_0\|_{L^2_{\xi\eta}} \|\chi_{\tilde{Q}_\alpha} \hat{v}_0\|_{L^2_{\xi\eta}}
	\]
	respectively on
	\[
		\|\chi_{Q_\alpha} \Sigma_2 \|_{L^2_{\xi\eta}}^2 \lesssim R^{\gamma+} \|\hat{u}_0\|_{L^2_{\xi\eta}}^2 \|\chi_{\tilde{Q}_\alpha} \hat{v}_0\|_{L^2_{\xi\eta}}^2
	\]
	Since the $\tilde{Q}_\alpha$ can be chosen in such a way that their union covers $\mathbb{R}^2$ exactly four times, we can sum over $\alpha \in \mathbb{Z}^2$ to obtain
	\[
		\|\RN{2}\|_{L^2_{\xi\eta\tau}}^2 \lesssim \sum_{\alpha \in \mathbb{Z}^2} R^{\gamma+} \|\hat{u}_0\|_{L^2_{\xi\eta}}^2 \|\chi_{\tilde{Q}_\alpha} \hat{v}_0\|_{L^2_{\xi\eta}}^2 \lesssim R^{\gamma+} \|\hat{u}_0\|_{L^2_{\xi\eta}}^2 \|\hat{v}_0\|_{L^2_{\xi\eta}}^2
	\]
	which by Plancherel gives the desired bound.
\end{proof}
So that we can make use of this estimate we will first use the transfer principle
\[
	\|Q(u,v)\|_{L^2_{xyt}} \lesssim \|u\|_{X_{\frac{\gamma}{2}+,b}} \|v\|_{X_{0,b}},
\]
which holds for any $b > \frac{1}{2}$, and also interpolate this with the trivial bound
\[
	\|Q(u,v)\|_{L^2_{xyt}} \le \|u\|_{L^4_t L^\infty_{xy}} \|v\|_{L^4_t L^2_{xy}} \lesssim \|u\|_{X_{1+, \frac{1}{4}}} \|v\|_{X_{0, \frac{1}{4}}},
\]
in order to arrive at
\begin{equation}\label{bilinXsb}
	\|Q(u,v)\|_{L^2_{xyt}} \lesssim \|u\|_{X_{\frac{\gamma}{2}+,\frac{1}{2}-}} \|v\|_{X_{0,\frac{1}{2}-}}.
\end{equation}
Dualizing we obtain
\begin{equation}\label{dualBilinXsb}
	\|Q(u,v)\|_{X_{0,-\frac{1}{2}+}} \lesssim \|u\|_{L^2_{xyt}} \|v\|_{X_{\frac{\gamma}{2}+,\frac{1}{2}-}}.
\end{equation}
One additional estimate is needed, which we prove with a second dyadic decomposition.
\begin{lemma}
	Assume that~\eqref{generalNTestimate} holds with a certain $\gamma \in [0,1)$ for the nondegenerate curves of type (i) and (ii). Then
	\begin{equation}\label{transposedBilin}
		\|Q(u,v)\|_{L^2_t H^{-\frac{\gamma}{2}-}} \lesssim \|u\|_{X_{0,\frac{1}{2}-}} \|v\|_{X_{0,\frac{1}{2}-}}.
	\end{equation}
\end{lemma}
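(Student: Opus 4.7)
By duality, estimate~\eqref{transposedBilin} reduces to the trilinear bound
\[
|T(u,v,w)| := \Big|\int Q(u,v)\,\bar w\,dx\,dy\,dt\Big| \lesssim \|u\|_{X_{0,\frac12-}}\|v\|_{X_{0,\frac12-}}\|w\|_{L^2_t H^{\frac\gamma2+}}
\]
for all test functions $w$. I start with a Littlewood--Paley decomposition in spatial frequency, $u = \sum_{N_1} u_{N_1}$, $v = \sum_{N_2} v_{N_2}$, $w = \sum_{N_3} w_{N_3}$. The convolution constraint makes $T(u_{N_1}, v_{N_2}, w_{N_3})$ vanish unless the two largest of $\{N_1, N_2, N_3\}$ are comparable, so I split into a high-low and a high-high-low regime.

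In the high-low regime (say $N_2 \le N_1 \sim N_3$) I apply the symmetric version of~\eqref{bilinXsb}, placing the $\gamma/2+$-derivative on the smaller input:
\[
\|Q(u_{N_1}, v_{N_2})\|_{L^2} \lesssim N_2^{\gamma/2+}\|u_{N_1}\|_{X_{0,\frac12-}}\|v_{N_2}\|_{X_{0,\frac12-}}.
\]
Combining with $\|w_{N_3}\|_{L^2} = N_3^{-\gamma/2-}\|w_{N_3}\|_{L^2_t H^{\gamma/2+}}$ and using $(N_2/N_3)^{\gamma/2+} \le 1$, a Cauchy--Schwarz on the dyadic indices closes this case.

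The main obstacle is the high-high-low regime, $N_1 \sim N_2 =: N \gg N_3$, where the bilinear estimate yields an unwanted factor $N^{\gamma/2+}$ not matched by $N_3^{-\gamma/2-}$. I handle this through the announced \emph{second dyadic decomposition}, now in modulation: $u = \sum_{L_1} u^{L_1}$, $v = \sum_{L_2} v^{L_2}$. The resonance identity $(\tau_1-\varphi_1)+(\tau_2-\varphi_2)-(\tau-\varphi)=r$ forces $\max(L_1, L_2, \langle\tau-\varphi\rangle) \gtrsim |r|$, and from the explicit expression for $r$ computed in Section~3 one generically has $|r| \sim N^2 N_3$ in this configuration. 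When $L_{\max}$ sits on an input, the weight $L_i^{\frac12-}$ hidden inside $\|u^{L_i}\|_{X_{0,\frac12-}} = L_i^{\frac12-}\|u^{L_i}\|_{L^2}$ produces a decay $(N^2 N_3)^{\frac12-}$, which for $\gamma < 1$ absorbs the $N^{\gamma/2+}$ loss with room to spare.

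The hardest step is the remaining subcase, in which the output modulation $\langle\tau-\varphi\rangle$ itself is largest, since the norm $\|w\|_{L^2_t H^{\gamma/2+}}$ carries no modulation weight to exploit. Here I would return to Fourier space, apply Cauchy--Schwarz on the $\tau_1$-integration, and reduce matters to bounding the weighted lattice sum
\[
\sum_{(\xi_1,\eta_1)\in\Z^2}\frac{1-\delta_{\xi,0}\delta_{\xi_1,0}}{\langle(\tau-\varphi)+r(\xi,\eta,\xi_1,\eta_1)\rangle^{1-}}
\]
by $\langle(\xi,\eta)\rangle^{\gamma+}$, via a dyadic decomposition in the resonance level combined with Schmidt's theorem applied to the type~(i) hyperbola $r = \text{const}$, exactly as in the treatment of $\Sigma_1$ in the proof of the previous proposition. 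A final Cauchy--Schwarz over the remaining dyadic indices, using the gap $\gamma < 1$ to absorb the $\varepsilon$-losses, completes the proof.
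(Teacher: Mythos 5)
The high-low regime and the general setup (duality, Littlewood--Paley in spatial frequency, and the observation that only high-low and high-high-low interactions survive) are fine and essentially match the paper. The gap is in the high-high-low regime, which is where the whole difficulty of the lemma sits, and your modulation decomposition does not close it.

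Your claim that ``generically $|r| \sim N^2 N_3$'' in the high-high-low configuration is the weak link: the resonance function $r(\xi_1,\xi_2,\eta_1,\eta_2)=3\bigl(\xi(\xi_1\xi_2-\eta_1\eta_2)-\eta(\xi_1\eta_2+\xi_2\eta_1)\bigr)$ is a nondegenerate quadratic in $(\xi_1,\eta_1)$ for fixed $(\xi,\eta)\neq(0,0)$, so the zero set $\{r=0\}$ is a conic that passes through arbitrarily high frequencies. Near it, $|r|$ is as small as you like while $N_1\sim N_2 \sim N$ are huge, and then $\max(L_1,L_2,\langle\tau-\varphi\rangle)\gtrsim|r|$ imposes no useful lower bound: all three modulations may be $O(1)$. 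In this resonant slab none of your modulation weights supply the missing factor $(N/N_3)^{\gamma/2}$, and the bilinear estimate~\eqref{bilinXsb} still produces a loss $N^{\gamma/2+}$. Your proposed repair — Cauchy--Schwarz in $\tau_1$ and then bounding
\[
\sum_{(\xi_1,\eta_1)}\bigl\langle(\tau-\varphi)+r\bigr\rangle^{-1+}\lesssim\langle(\xi,\eta)\rangle^{\gamma+}
\]
— cannot hold: restricting $(\xi_1,\eta_1)$ to the annulus of radius $N$ (which is all the Littlewood--Paley localization gives you), Schmidt's theorem bounds the lattice points on $r=\mathrm{const}$ by $N^{\gamma+}$, not $N_3^{\gamma+}$, so the sum is of size $N^{\gamma+}\gg\langle(\xi,\eta)\rangle^{\gamma+}$ precisely in the troublesome case $N\gg N_3$.

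What the paper does instead, and what is the key idea you are missing, is a second dyadic decomposition in \emph{physical frequency space} rather than modulation: after fixing the output scale $|(\xi,\eta)|\sim 2^l$, both inputs are cut into pieces supported in squares $Q_\alpha^l$ of side $2^l$ centered at $\alpha 2^l$. The convolution constraint $(\xi,\eta)=(\xi_1,\eta_1)+(\xi_2,\eta_2)$ with small output then forces $\alpha$-diagonal pairings (up to neighbors), and the bilinear estimate~\eqref{bilinXsb} applied to each pair sees a factor $R^{\gamma/2+}$ with $R\sim 2^l$ — the \emph{diameter of the localized support} — rather than $N^{\gamma/2+}$. The sum over $\alpha$ is then handled by almost orthogonality, and the $\e$-room in $\gamma<1$ absorbs the losses. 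The paper also first peels off the low-frequency part via $Q(u,v)=((I-Q_0)u)v+(Q_0u)(I-Q_0)v$ so that~\eqref{bilinXsb} applies to each localized piece. Without this square decomposition the high-high-low case does not close, so the proof as proposed has a genuine gap.
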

\begin{proof}
	With $\widehat{Q_0(u,v)}(\xi,\eta,\tau) = \delta_{\xi, 0} \hat{u}(\xi,\eta,\tau)$ we can write
	\[
		Q(u, v) = ((I-Q_0)u)v + (Q_0 u)(I-Q_0)v,
	\]
	and the Fourier transform of both contributions vanishes, if $\xi_1 = \xi_2 = \xi = 0$, so that~\eqref{bilinXsb} applies to both of them. We give the argument only for the first, which we write as $wv$ with $w = (I - Q_0)u$. Using a dyadic decomposition in the space variables only with Littlewood-Paley projections $P_{\Delta l} = \F_{xy}^{-1}\chi_{\set{|(\xi, \eta)| \sim 2^l}}\F_{xy}$, $l \ge 1$, and $P_{\Delta 0} = \F_{xy}^{-1}\chi_{\set{|(\xi, \eta)| \le 1}}\F_{xy}$ we obtain
	\[
		\|wv\|_{L^2_t H^{-\frac{\gamma}{2}-}} \le \sum_{l \ge 0} 2^{-l(\frac{\gamma}{2} + \e)} \|P_{\Delta l}(uv)\|_{L^2_{xyt}}.
	\]
	Now for a fixed $l \in \N_0$ we write
	\begin{equation}\label{squareDecomp}
		\|P_{\Delta l}(wv)\|^2_{L^2_{xyt}} = \sum_{\alpha, \beta \in \Z^2} \lb P_{\Delta l}(P_{Q_\alpha^l}(w) \cdot v), P_{\Delta l}(P_{Q_\beta^l}(w) \cdot v) \rb
	\end{equation}
	where we have introduced a second dyadic decompostion with squares $Q_\alpha^l$ of side length $2^l$, centered at $\alpha 2^l$ with $\alpha \in \Z^2$. Double sized squares with the same centers will be denoted $\tilde{Q}_\alpha^l$.
	
	Hence if $(\xi_1, \eta_1) \in Q_\alpha^l$ and $|(\xi, \eta)| \le 2^l$ then we must have $(\xi_2, \eta_2) = (\xi, \eta) - (\xi_1, \eta_1) \in \tilde{Q}_{-\alpha}^l$, so we can estimate
	\begin{align*}
		\eqref{squareDecomp} &= \sum_{\alpha, \beta \in \Z^2} \lb P_{Q_\alpha^l}(w) \cdot P_{\tilde{Q}_{-\alpha}^l}(v), P_{Q_\beta^l}(w) \cdot P_{\tilde{Q}_{-\beta}} (v) \rb \\
		&\le \sum_{\alpha, \beta \in \Z^2} \lb P_{\tilde{Q}_\alpha^l} (w) \cdot P_{\tilde{Q}_{\beta}^l} (\overline{v}), P_{\tilde{Q}_\beta^l}(w) \cdot P_{\tilde{Q}_{\alpha}}(\overline{v}) \rb \\
		&\le \sum_{\alpha, \beta \in \Z^2} \|P_{\tilde{Q}_\alpha^l} (w) \cdot P_{\tilde{Q}_{-\beta}^l} (v)\|_{L^2_{xyt}} \|P_{\tilde{Q}_\beta^l} (w) \cdot P_{\tilde{Q}_{-\alpha}} (v)\|_{L^2_{xyt}} \\
		&\le \sum_{\alpha, \beta \in \Z^2} \|P_{\tilde{Q}_\alpha^l} (w) \cdot P_{\tilde{Q}_{-\beta}^l} (v)\|_{L^2_{xyt}}^2 \\
		&\lesssim 2^{l(\gamma + \frac{\e}{2})} \sum_{\alpha, \beta \in \Z^2} \|P_{\tilde{Q}_\alpha^l} w\|_{X_{0,\frac{1}{2}-}}^2 \| P_{\tilde{Q}_{-\beta}^l} v\|_{X_{0,\frac{1}{2}-}}^2 \\
		&\lesssim 2^{l(\gamma + \frac{\e}{2})} \|w\|_{X_{0,\frac{1}{2}-}}^2 \|v\|_{X_{0,\frac{1}{2}-}}^2 \lesssim 2^{l(\gamma + \frac{\e}{2})} \|u\|_{X_{0,\frac{1}{2}-}}^2 \|v\|_{X_{0,\frac{1}{2}-}}^2
	\end{align*}
	Here we have used Cauchy-Schwarz twice, the $X_{s,b}$-estimate~\eqref{bilinXsb} and the almost orthogonality of the sequences $(P_{\tilde{Q}_\alpha^l} w)_{\alpha \in \Z^2}$ and $(P_{\tilde{Q}_{-\beta}^l} v)_{\beta \in \Z^2}$. Altogether
	\begin{align*}
		\|wv\|_{L^2_t H^{-\frac{\gamma}{2}-}} &\lesssim \sum_{l \ge 0} 2^{-l(\frac{\gamma}{2} + \e)} 2^{l(\frac{\gamma}{2} + \frac{\e}{2})} \|u\|_{X_{0,\frac{1}{2}-}} \|v\|_{X_{0,\frac{1}{2}-}} \\
		&\lesssim \|u\|_{X_{0,\frac{1}{2}-}} \|v\|_{X_{0,\frac{1}{2}-}}
	\end{align*}
	as desired.
\end{proof}
Now we are prepared to show the proposition that, when inserted into the general framework of Bourgain's $X_{s,b}$-spaces, will result in a well-posedness theorem.
\begin{prop}\label{generalPerEstimate}
	Let $\gamma \in [0, 1)$, such that~\eqref{generalNTestimate} holds for nondegenerate curves of type (i) and (ii), and $s > \frac{\gamma - 1}{2}$, then for all $u, v \in \dot{X}_{s, \frac{1}{2}}$ with support in $\R^2 \times [-\delta, \delta]$ there exists an $\e > 0$ such that
	\begin{align*}
		&\|B(u,v)\|_{\dot{X}_{s,-\frac{1}{2}}} \lesssim \delta^\e \|u\|_{\dot{X}_{s,\frac{1}{2}}} \|v\|_{\dot{X}_{s,\frac{1}{2}}} &&\quad\text{and}\\
		&\|B(u,v)\|_{\dot{Y}^s} \lesssim \delta^\e \|u\|_{\dot{X}_{s,\frac{1}{2}}} \|v\|_{\dot{X}_{s,\frac{1}{2}}} &&\quad\text{hold.}
	\end{align*}
\end{prop}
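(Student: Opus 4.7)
The plan is to dualize and reduce to a trilinear estimate, then combine the multiplier bound~\eqref{resonance} with the resonance identity to split into three cases according to which of the modulations $\lb\tau-\varphi(\xi,\eta)\rb$, $\lb\tau_1-\varphi(\xi_1,\eta_1)\rb$, $\lb\tau_2-\varphi(\xi_2,\eta_2)\rb$ dominates. In each case one of the Schmidt-based bilinear tools~\eqref{bilinXsb}, \eqref{dualBilinXsb}, \eqref{transposedBilin} takes the place of the Strichartz-type bound~\eqref{StrX} used in Proposition~2, and the threshold $s > \frac{\gamma-1}{2}$ plays the role that $s > -\frac{3}{4}$ plays on the real line. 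A preliminary observation is that the multiplier $m$ vanishes whenever $\xi = \xi_1 = \xi_2 = 0$, so $B(u,v)$ is of $Q$-type in the sense of the previous subsection, and the Schmidt-based estimates are actually applicable.

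For the $\dot{X}_{s,-\frac{1}{2}}$-estimate we test against $w \in \dot{X}_{-s,\frac{1}{2}}$ of unit norm, and choose $\theta = \frac{1}{2} - \e$ in~\eqref{resonance}. The factor $|r|^\theta$ is absorbed into the largest of the three modulation weights via $|r| \lesssim \max_i \lb\tau_i - \varphi(\xi_i,\eta_i)\rb$, leaving a residual $\e$-power of that weight which supplies the time-localization gain $\delta^\e$ in the standard way. When the output modulation dominates, Cauchy-Schwarz against $w$ reduces matters to an $L^2_{xyt}$-bound on $I^{s+1-\theta}(F_1 F_2)$, with $F_1, F_2 \in \dot{X}_{s+\frac{1}{2},\frac{1}{2}-}$ of norm $\|u\|_{\dot{X}_{s,\frac{1}{2}}}$, $\|v\|_{\dot{X}_{s,\frac{1}{2}}}$. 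Distributing $s + 1 - \theta = s + \frac{1}{2} + \e \ge 0$ derivatives onto one factor via $\lb(\xi,\eta)\rb^{s+1-\theta} \lesssim \lb(\xi_1,\eta_1)\rb^{s+1-\theta} + \lb(\xi_2,\eta_2)\rb^{s+1-\theta}$ and then applying~\eqref{bilinXsb} closes the estimate precisely under $\gamma/2 < s + \frac{1}{2}$, which is our hypothesis. When instead the modulation of one input (say $u$) dominates, $u$ effectively drops to $L^2_t H^{s+\frac{1}{2}}$; Cauchy-Schwarz in its variables leaves an $L^2$-pairing of $w$ against $v$, to which~\eqref{dualBilinXsb} applies in the low-to-high regime of $(\xi_1,\eta_1)$ vs.\ $(\xi_2,\eta_2)$, and~\eqref{transposedBilin} (dualized) in the high-to-high regime.

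The $\dot{Y}^s$-bound is obtained by the same case split. In the case of dominant output modulation, Cauchy-Schwarz in $\tau$ on the $\ell^2_{\xi\eta} L^1_\tau$-structure of $\dot{Y}^s$ converts it, at the cost of the residual $\e$-modulation weight, into an $\ell^2_{\xi\eta} L^2_\tau$-bound, i.e.\ a $\dot{X}_{s,-\frac{1}{2}+\e}$-bound that has just been proved. In the other cases the desired estimate follows from~\eqref{transposedBilin} combined with an almost-orthogonal dyadic decomposition in modulation, in the spirit of the proof of~\eqref{transposedBilin} itself. The step I expect to be most delicate is the high-high-to-low interaction when $|(\xi_1,\eta_1)| \sim |(\xi_2,\eta_2)| \gg |(\xi,\eta)|$: here the asymmetric nature of~\eqref{bilinXsb}, which places the positive regularity $\gamma/2+$ on only one factor, forces a careful dyadic decomposition of the input frequencies and a correspondingly careful distribution of the derivative weight $\lb(\xi,\eta)\rb^{s+1-\theta}$ before the bilinear estimate can actually be applied.
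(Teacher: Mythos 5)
Your overall plan matches the paper's: a case split according to which of the three modulations is largest, the multiplier/resonance identity with $\theta\approx\frac12$ to spread a half-derivative gain over the factors, and a replacement of the non-periodic Strichartz estimates by the Schmidt-based tools~\eqref{bilinXsb}, \eqref{dualBilinXsb}, \eqref{transposedBilin}; you also correctly identify the threshold $s>\frac{\gamma-1}{2}$. However, there are two concrete problems in the case where an input modulation dominates.

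First, and most importantly, your assignment of tools in Case~2 is \emph{reversed} relative to what actually closes the estimate. Suppose $\lb\tau_1-\varphi(\xi_1,\eta_1)\rb$ is maximal, so $u$ goes to $L^2$ type. In the \emph{low-to-high} regime $|(\xi_1,\eta_1)|\ll|(\xi_2,\eta_2)|$ you propose to use~\eqref{dualBilinXsb}, but~\eqref{dualBilinXsb} places the $\gamma/2+$ weight on the \emph{second} factor. After distributing $\lb(\xi,\eta)\rb^{s+\frac12}\lesssim\lb(\xi_2,\eta_2)\rb^{s+\frac12}$ onto $v$ one is left with $\|v\|_{\dot{X}_{s+\gamma/2+,\frac12-}}$, which cannot be controlled by $\|v\|_{\dot{X}_{s,\frac12}}$ since $\gamma>0$; the bilinear estimate loses $\gamma/2$ derivatives on the high-frequency factor and nothing is available to absorb them. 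The correct choice here is the dual of~\eqref{transposedBilin}, which puts the $\gamma/2+$ cost on the \emph{first}, low-frequency factor by sending $I^{-\frac12}\Lambda^{\frac12}u$ to $L^2_t H^{\gamma/2+}$: one then gets $\|u\|_{\dot{X}_{\frac{\gamma-1}{2}+,\frac12}}\|v\|_{\dot{X}_{s,\frac12-}}$, which closes because $\frac{\gamma-1}{2}<s$. Conversely,~\eqref{dualBilinXsb} is the right tool when the large-modulation factor $u$ is the \emph{high}-frequency one ($|(\xi_1,\eta_1)|\gtrsim|(\xi_2,\eta_2)|$), since there the $\gamma/2+$ can be absorbed by the low-frequency factor $v$. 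You do not explicitly discuss this high-to-low regime at all, so as written Case~2 has a genuine gap.

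Second, the $\e$-bookkeeping is slightly off. With $\theta=\frac12-\e$ one has $F_1=I^{-\theta}u=I^{-\frac12+\e}u$, and $\|F_1\|_{\dot{X}_{s+\frac12,\frac12-}}=\|u\|_{\dot{X}_{s+\e,\frac12-}}$, which is \emph{not} bounded by $\|u\|_{\dot{X}_{s,\frac12}}$ (it has $\e$ excess spatial regularity). Moreover, the residual $\lb\tau-\varphi\rb^{-\e}$ weight on the output does not by itself produce the $\delta^\e$ gain; the standard mechanism is the loss of modulation on the \emph{inputs}, which is exactly why~\eqref{bilinXsb}, \eqref{dualBilinXsb}, \eqref{transposedBilin} are stated with $\frac12-$ rather than $\frac12$. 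The clean route, as in the paper, is to take $\theta=\frac12$ exactly for the $\dot{X}_{s,-\frac12}$ estimate and to let $\delta^\e$ come from the $\frac12-$ weights (together with $\theta=1-$ for the $\dot{Y}^s$ bound when the output modulation dominates). Finally, the high-high-to-low worry you raise is actually benign: \eqref{bilinXsb} is an unconditional estimate and, by symmetry of $Q$, the $\gamma/2+$ weight can be placed on either factor, so no extra almost-orthogonal decomposition is needed beyond what was already used in proving~\eqref{transposedBilin}.
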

\begin{proof}
	Since our data is of mean zero we may use $|(\xi_i, \eta_i)| \sim \lb (\xi_i, \eta_i) \rb$ for $i \in \{1,2\}$, and will do so freely without further mention. We also assume $s < 0$, because $\gamma < 1$. As in the nonperiodic case the proof is split into cases where a single one of the modulations \eqref{modulations} is maximal.
	\quad \\
	\emph{Case 1:} $\lb \tau - \varphi(\xi, \eta) \rb$ is maximal. Without loss of generality we may assume that $|(\xi_1, \eta_1)| \gtrsim |(\xi_2, \eta_2)|$. Making use of~\eqref{resonance} with $\theta = \frac{1}{2}$ we can estimate
	\begin{align}
		\|B(u, v)\|_{\dot{X}_{s, -\frac{1}{2}}} &\lesssim \|Q(I^{-\frac{1}{2}}u,I^{-\frac{1}{2}}v)\|_{\dot{X}_{\frac{1}{2}+s,0}} \lesssim \|Q(I^s u, I^{-\frac{1}{2}} v)\|_{L^2_{xyt}}\\
		&\lesssim \|u\|_{\dot{X}_{s,\frac{1}{2}-}} \|v\|_{\dot{X}_{\frac{\gamma - 1}{2}+,\frac{1}{2}-}} \lesssim \delta^\e \|u\|_{\dot{X}_{s,\frac{1}{2}}} \|v\|_{\dot{X}_{s,\frac{1}{2}}}.
	\end{align}
	In the penultimate step we used our bilinear estimate~\eqref{bilinXsb}. The last step depends on the support condition on $u$ and $v$.
	\quad \\
	\emph{Case 2:} $\lb \tau_1 - \varphi(\xi_1, \eta_1) \rb$ is maximal. Again we begin this case by using~\eqref{resonance} with $\theta = \frac{1}{2}$, though now we must use the modulation on the first factor to eliminate the resonance function:
	\begin{equation}\label{startingEst}
		\|B(u, v)\|_{\dot{X}_{s, -\frac{1}{2}}} \lesssim \|B(u, v)\|_{\dot{X}_{s, -\frac{1}{2}+}} \lesssim \|Q(I^{-\frac{1}{2}}\Lambda^{\frac{1}{2}}u, I^{-\frac{1}{2}}v)\|_{\dot{X}_{\frac{1}{2}+s,-\frac{1}{2}+}}
	\end{equation}
	The first bound may seem trivial and unnecessary, but we will come back to it in bounding the $Y^s$-norms. Depending on which factor the derivatives on the product can now fall we must differentiate between two cases:\\
	\emph{Subcase 2.1:} $|(\xi_1, \eta_1)| \gtrsim |(\xi_2, \eta_2)|$. Here the derivatives can only fall on the first factor, so we use~\eqref{dualBilinXsb} putting $u$ into $L^2_{xyt}$ and lastly using the support condition again:
	\[
		\eqref{startingEst} \lesssim \|Q(I^{s}\Lambda^{\frac{1}{2}}u, I^{-\frac{1}{2}}v)\|_{\dot{X}_{0,-\frac{1}{2}+}} \lesssim \|u\|_{\dot{X}_{s,\frac{1}{2}}} \|v\|_{\dot{X}_{\frac{\gamma - 1}{2}+,\frac{1}{2}-}} \lesssim \delta^\e \|u\|_{\dot{X}_{s,\frac{1}{2}}} \|v\|_{\dot{X}_{s,\frac{1}{2}}}.
	\]
	\emph{Subcase 2.2:} $|(\xi_1, \eta_1)| \lesssim |(\xi_2, \eta_2)|$. This time we use the dual of~\eqref{transposedBilin} putting the first factor in $L^2_t H^{\frac{\gamma}{2}+}$:
	\[
		\eqref{startingEst} \lesssim \|Q(I^{-\frac{1}{2}}\Lambda^{\frac{1}{2}}u, I^{s}v)\|_{\dot{X}_{0,-\frac{1}{2}+}} \lesssim \delta^\e \|u\|_{\dot{X}_{s,\frac{1}{2}}} \|v\|_{\dot{X}_{s,\frac{1}{2}}}
	\]
	The case where $\lb \tau_2 - \varphi(\xi_2, \eta_2) \rb$ is maximal need not be considered by symmetry.

	Next we can deal with the $Y^s$-norm estimate. Here again we consider two cases, where either the modulation of the product or of the first factor is maximal.\\
	\quad \\
	\emph{Case 1:} $\lb \tau - \varphi(\xi, \eta) \rb$ is maximal. Using $\theta = 1-$ in~\eqref{resonance} we can make nearly complete use of the modulation. Discarding the derivative gain (and remainder of the modulation) on the product and after applying Cauchy-Schwarz twice we arrive at the desired bound
	\begin{align*}
	\|B(u,v)\|_{\dot{Y}^s} &\lesssim \|I^{s+}\Lambda^{0-} Q(I^{-1+}u, I^{-1+}v)\|_{L^2_{\xi\eta}L^1_\tau} \lesssim \|Q(I^{-1+}u, I^{-1+}v)\|_{L^2_{\xi\eta}L^1_\tau}\\
	&\lesssim \|u\|_{\dot{X}_{s,\frac{1}{2}-}} \|v\|_{\dot{X}_{s,\frac{1}{2}-}} \lesssim \delta^\e \|u\|_{\dot{X}_{s,\frac{1}{2}}} \|v\|_{\dot{X}_{s,\frac{1}{2}}}.
	\end{align*}
	\emph{Case 2:} $\lb \tau_1 - \varphi(\xi_1, \eta_1) \rb$ is maximal. Here we may use just over half of the modulation to apply Cauchy-Schwarz in the $\tau$ variable. This results in the same situation as in after the first inequality in~\eqref{startingEst}. The case where $\lb \tau_2 - \varphi(\xi_2, \eta_2) \rb$ is maximal again need not be considered.
\end{proof}
Thus the quality of our well-posedness result depends entirely on the exponent in the number theoretic estimate~\eqref{generalNTestimate} that we use. The previously mentioned result due to Schmidt~\cite{Schmidt}*{Theorem~1} gives
\begin{kor}
	In Proposition~\ref{generalPerEstimate} one can choose $\gamma = \frac{3}{5}$ and thus Theorem~\ref{NVper} holds.
\end{kor}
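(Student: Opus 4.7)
The corollary has two pieces: first, that $\gamma = \tfrac{3}{5}$ is an admissible exponent in the assumption of Proposition~\ref{generalPerEstimate}; and second, that this yields the well-posedness claim of Theorem~\ref{NVper} via a standard contraction argument.

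For the first piece, my plan is to verify that Schmidt's hypothesis applies to the nondegenerate curves listed as (i) and (ii). A curve of type (i) is a conic $a(x^2-y^2)+2bxy=c$ with $c\ne 0$; after a rotation that diagonalizes $a(x^2-y^2)+2bxy$, it becomes a standard hyperbola $\alpha X^2 - \beta Y^2 = c$ with $\alpha,\beta>0$, on each branch of which $Y=f(X)$ satisfies $f''$ of constant sign and monotonic. For a curve of type (ii), the proof of the previous proposition already exhibited the explicit branches $y_\pm = \frac{\pm\sqrt{x^2(4\xi_1^2+4\eta_1^2+8\xi_1 x+3x^2)}-2\eta_1 x}{3x+2\xi_1}$, a real algebraic function of $x$; its second derivative is rational in $x$ and $\sqrt{\cdot}$, hence has at most $O(1)$ sign changes. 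Partitioning each branch into the $O(1)$ subarcs on which $f''$ is monotonic (and swapping the roles of $x$ and $y$ where the branch becomes vertical), Schmidt's theorem applies on each subarc and yields the bound $n(\mathfrak{C},N)\lesssim_\e N^{3/5+\e}$ on the full curve, with $\gamma=\tfrac{3}{5}$. The only thing to check in passing is the degeneracy discussion in the proof of the proposition, namely that when the curve does degenerate into lines the excluded configurations are handled by the Kronecker-delta factor $(1-\delta_{\xi,0}\delta_{\xi_1,0})$ or by the explicit isolated solution $(-\tfrac{2}{3}\xi_1,-\tfrac{2}{9}\xi_1^2/\eta_1)$; otherwise the putative line would have irrational slope $\pm 1/\sqrt{3}$, contradicting the presence of infinitely many integer points on it.

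With $\gamma=\tfrac{3}{5}$ admissible, the hypothesis $s>\tfrac{\gamma-1}{2}$ in Proposition~\ref{generalPerEstimate} becomes $s>-\tfrac{1}{5}$, exactly the range in Theorem~\ref{NVper}. For the second piece I then set up the standard Bourgain fixed-point argument: let $\psi\in C_c^\infty(\R)$ be a cutoff equal to $1$ on $[-1,1]$ and consider, for $\delta\in(0,1]$, the map
\[
\Phi(u)(t)=\psi(t)U_\varphi(t)u_0-\psi(t/\delta)\int_0^t U_\varphi(t-t')\,N(u)(t')\,dt'.
\]
Standard linear estimates (as in \cite{GTV}) give $\|\psi U_\varphi u_0\|_{\dot X_{s,1/2}}+\|\psi U_\varphi u_0\|_{\dot Y^s}\lesssim \|u_0\|_{H^s}$ and, for the Duhamel term,
\[
\Bigl\|\psi(t/\delta)\!\int_0^t U_\varphi(t-t')F(t')dt'\Bigr\|_{\dot X_{s,1/2}\cap \dot Y^s}\lesssim \|F\|_{\dot X_{s,-1/2}\cap \dot Y^s}.
\]
Together with the two bilinear bounds of Proposition~\ref{generalPerEstimate}, which yield $\|N(u)\|_{\dot X_{s,-1/2}\cap \dot Y^s}\lesssim \delta^\e\|u\|_{\dot X_{s,1/2}}^{2}$ on functions supported in $\R^2\times[-\delta,\delta]$, a standard ball/contraction argument in $\dot X_{s,1/2}^\delta$ closes for $\delta=\delta(\|u_0\|_{H^s})$ small enough. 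The $\dot Y^s$-bound, lost by choosing $b=\tfrac12$, is exactly what restores the embedding into $C([-\delta,\delta],H_0^s(\T^2))$; bilinearity of $N$ yields Lipschitz dependence on the data, and the mean-zero property is preserved because $\widehat{N(u)}(0,0,\tau)=0$.

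The step I expect to require the most care is the verification in the first paragraph: specifically, confirming that Schmidt's weak-monotonicity condition really applies to the $O(1)$ pieces of the cubic curves of type (ii) uniformly in the parameters $(\xi_1,\eta_1)$, and that the exceptional configurations (pairs of lines, isolated singular solutions, the endpoint $3x+2\xi_1=0$ where the parametrization blows up) are all absorbed either by the $(1-\delta_{\xi,0}\delta_{\xi_1,0})$ factor or by an $O(1)$ correction that is trivially within the $R^{3/5+}$ bound. Once the curve-counting bound is in place, the remaining contraction argument is entirely routine.
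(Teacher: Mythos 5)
Your proposal is correct and takes essentially the same approach as the paper: the corollary is stated there as an immediate consequence of Schmidt's theorem (quoted at the head of Section~5 with $\gamma=\frac35$) together with Proposition~\ref{generalPerEstimate} and the routine Bourgain contraction in $\dot X^\delta_{s,1/2}\cap\dot Y^s$, with no separate argument given. The details you supply --- piecewise monotonicity of $f''$ on $O(1)$ subarcs, the disposal of degenerate cases via the Kronecker-delta factor and the irrational slope $\pm1/\sqrt3$, and the linear/Duhamel estimates with continuity recovered through $\dot Y^s$ --- are exactly what the paper leaves implicit; the one slightly informal point is the rotation to diagonalize the conic (which does not preserve $\Z^2$), but the needed conclusion, that the unrotated hyperbola splits into $O(1)$ graphs with monotone second derivative, holds directly.
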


\section{Open questions}

Unfortunately there are several questions that we cannot answer. They are immediately connected with our results here:

\begin{itemize}
 \item[(1)] Optimality in the nonperiodic case: Is the Cauchy problem for (NV)
    locally well-posed in $H^s(\R^2)$ for $s \in [-1,-\frac34]$? For KdV on the
    real line this gap was closed by the celebrated global $H^{-1}(\R)$-result
    of Killip and Vișan~\cite{KV}, but they had to go beyond iterative methods
    because KdV in $H^s(\R)$ is ill-posed for $s<-\frac34$ in the $C^0$-uniform
    sense by~\cite{KPV01}*{Theorem~1.4}. The problem with (NV) is possibly on a
    much lower level, since our attempt to prove $C^2$-illposedness below
    $H^{-\frac34}(\R^2)$ failed. Schottdorf's $L^2(\R^2)$-result for (mNV) in
    combination with the Miura-type map suggests in a sense, that one should be
    able to do the step down to $H^{-1}(\R^2)$ by the contraction mapping
    principle.
 \item[(2)] Optimality in the periodic case: Is the initial value problem for
    (NV) locally well-posed in $H^s_0(\T^2)$ for $s \in [-\frac12, -\frac15]$? In
    our proof we inserted the estimate
    $$\# (\Z^2 \cap H \cap Q_N) \le c N^{\frac35+}$$
    for the number of lattice points on a nondegenerate curve of type (i) and (ii) $H$ in a square
    $Q_N$ of size $N$. This estimate due to Schmidt~\cite{Schmidt} has the advantage
    of being independent of the shape of the curves. There are some estimates
    in the number theoretic literature with smaller exponents
    (e.g.~\cites{BP, Huxley}), which are valid for general sufficiently smooth
    curves, but it seems to be quite cumbersome to check whether they give the
    necessary \emph{uniform} bounds. Moreover, to get anything better than
    $\lesssim N^{\frac{4}{15}}$ seems to rely on specific properties of the
    family of curves in our considerations. Observe that an estimate
    $\lesssim N^{0+}$ for the number of lattice points would imply LWP in
    $H^s_0(\T^2)$ for $s > - \frac12$. Below $- \frac12$ there is
    $C^2$-illposedness by Bourgain's counterexample for KdV in the periodic
    case, see~\cite{Bmeasures}.
  \item[(3)] Can our result in the periodic case (valid for data of mean zero)
    be generalized to data of arbitrary mean? For KdV the reduction of the
    general to the mean zero case~\cite{B1}*{p.~219} is trivial in the sense that
    it leaves the $L^4$-estimate and the resonance function unchanged. For (NV)
    this reduction produces the additional linear term
    $$3\phi_0(\partial^2\overline{\partial}^{-1}+\overline{\partial}^2\partial^{-1})u, \qquad \mbox{where}\qquad \phi_0=\frac{1}{4\pi^2}\int_{\T^2}u_0(x,y)dxdy,$$
    which changes the phase function into
    $$\widetilde{\varphi}(\xi,\eta)= \varphi (\xi,\eta)(1+\frac{3\phi_0}{\xi^2+\eta^2}).$$
    With $E=3\phi_0$ this is precisely the situation of the ``nonzero energy''
    (NV) analyzed in~\cites{KMI, KMII} in the nonperiodic case. The resonance
    function is then disturbed by the additional term and the exact cancellation
    of the Fourier multiplier is destroyed.
\end{itemize}

\bibliography{refs.bib}
\end{document}